\newtheorem{lemma}{Lemma}
\newtheorem{theorem}{Theorem}
\newtheorem{corollary}[theorem]{Corollary}
\theoremstyle{remark}
\newtheorem*{acknowledgment}{Acknowledgment} 
\DeclareMathOperator{\re}{Re}
\newcommand{\eps}{\varepsilonup}
\renewcommand{\alpha}{\alphaup}
\renewcommand{\beta}{\betaup}
\renewcommand{\xi}{\xiup}
\renewcommand{\lambda}{\lambdaup}
\begin{document}

\title{On the convergence of some alternating series}
\author{Angel V. Kumchev}
\address{Department of Mathematics, 7800 York Road, Towson University, Towson, MD 21252}
\email{akumchev@towson.edu}

\maketitle

\section{Introduction}
\label{si}

This note is motivated by a question a colleague of the author's often challenges calculus students with: Does the series
\begin{equation}\label{i.1}
  \sum_{n=1}^{\infty} \frac {(-1)^n|\sin n|}{n}
\end{equation}
converge? This series combines features of several series commonly studied in calculus:
\[
  \sum_{n=1}^\infty \frac {(-1)^{n}}{n}, \quad \sum_{n=1}^\infty \frac {|\sin n|}{n} \quad \text{and} \quad \sum_{n=1}^\infty \frac {\sin(nx)}{n}
\]
come to mind. However, unlike these familiar examples, the series~\eqref{i.1} seems to live on the fringes, just beyond the reach of standard convergence tests like the alternating series test or the tests of Abel and Dirichlet. It is therefore quite natural for an infinite series aficionado to study~\eqref{i.1} in hope to find some clever resolution of the question of its convergence. Yet, the author's colleague reports that although he has posed the above question to many calculus students, he has never received an answer. Furthermore, he confessed that he himself had no answer to that question. As it turns out, there is a good reason for that: the question is quite delicate and is intimately connected to deep facts about Diophantine approximation---facts which the typical second-semester calculus student is unlikely to know. 

The series \eqref{i.1} is obtained by perturbation of the moduli of the alternating harmonic series, which is the simplest conditionally convergent alternating series one can imagine. In this note, we study the convergence sets of similar perturbations of a wide class of alternating series. In particular, the convergence of \eqref{i.1} follows from our results and classical work by Mahler \cite{Ma53} on the rational approximations to $\pi$. 

Let $\mathfrak F$ denote the class of continuous, decreasing functions $f : [1, \infty) \to \mathbb R$ such that
\[
  \lim_{x \to \infty} f(x) = 0, \quad \int_1^{\infty} f(x) \, dx = \infty.
\]
Note that if $f \in \mathfrak F$, then $f$ is a positive function and the alternating series $\sum_n (-1)^nf(n)$ is conditionally convergent. Our goal is to describe the convergence set of the related series
\begin{equation}\label{i.2}
  \sum_{n = 1}^{\infty} (-1)^nf(n)|\sin(n\pi\alpha)|.
\end{equation}

It is natural to start one's investigation of~\eqref{i.2} with the case when $\alpha$ is rational, since in that case the sequence $\{ (-1)^n|\sin(n\pi\alpha)| \}_{n=1}^\infty$ is periodic, and one may hope to see some pattern. Indeed, this turns out to be the case, and one discovers the following result.

\begin{theorem}\label{th1}
  Suppose that $f \in \mathfrak F$ and that $\alpha = a/q$, with $a \in \mathbb Z$, $q \in \mathbb Z^+$, and $\gcd(a, q) = 1$. The series~\eqref{i.2} converges if and only if $q$ is odd.
\end{theorem}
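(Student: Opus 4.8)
The plan is to exploit the periodicity of the coefficient sequence $c_n := (-1)^n |\sin(n\pi\alpha)|$ and to reduce the convergence problem to a single question: whether the mean value of $c_n$ over a full period vanishes. First I would observe that, since $a \in \mathbb Z$, we have $|\sin((n+q)\pi a/q)| = |\sin(n\pi a/q)|$, so $c_n$ is periodic with period dividing $2q$. Writing $\bar c := \frac{1}{2q}\sum_{n=1}^{2q} c_n$ for its mean and $c_n = \bar c + d_n$, the sequence $d_n$ is periodic with mean zero and hence has uniformly bounded partial sums. Because $f$ decreases monotonically to $0$, Dirichlet's test shows that $\sum_n f(n) d_n$ converges. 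Thus \eqref{i.2} converges if and only if $\bar c \sum_n f(n)$ converges. But $f$ is positive and decreasing, so $\sum_n f(n) \ge \int_1^\infty f(x)\,dx = \infty$, and $\sum_n f(n)$ diverges. The whole problem therefore collapses to deciding when $\bar c = 0$.

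Next I would compute the period sum $S := \sum_{n=1}^{2q} c_n = 2q\,\bar c$. Splitting the range into $1 \le n \le q$ and $q+1 \le n \le 2q$ and substituting $n = m+q$ in the second half, the identity $|\sin((m+q)\pi a/q)| = |\sin(m\pi a/q)|$ together with $(-1)^{m+q} = (-1)^q (-1)^m$ yields
\[
  S = \bigl(1 + (-1)^q\bigr)\, T, \qquad T := \sum_{m=1}^{q} (-1)^m |\sin(m\pi a/q)|.
\]
If $q$ is odd, the factor $1 + (-1)^q$ vanishes, so $S = 0$, hence $\bar c = 0$, and \eqref{i.2} converges. This disposes of one direction.

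It remains to treat even $q$, where $S = 2T$ and I must show $T \ne 0$; this is the crux of the argument. Here I would invoke the absolutely and uniformly convergent Fourier expansion
\[
  |\sin(\pi x)| = \frac{2}{\pi} - \frac{4}{\pi} \sum_{k=1}^{\infty} \frac{\cos(2\pi k x)}{4k^2 - 1},
\]
substitute $x = am/q$, and interchange the finite sum over $m$ with the sum over $k$ (justified by uniform convergence). The constant term contributes $\frac{2}{\pi}\sum_{m=1}^q (-1)^m = 0$, and a short geometric-series computation shows that $\sum_{m=1}^q (-1)^m \cos(2\pi k am/q)$ equals $q$ when $k$ is an odd multiple of $q/2$ and vanishes otherwise. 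This is exactly where the hypotheses $q$ even and (consequently, via $\gcd(a,q)=1$) $a$ odd come into play. Collecting the surviving frequencies gives the closed form
\[
  T = -\frac{4q}{\pi} \sum_{\substack{s \ge 1 \\ s \text{ odd}}} \frac{1}{q^2 s^2 - 1} < 0,
\]
so $T \ne 0$, whence $\bar c \ne 0$ and \eqref{i.2} diverges. The main obstacle is precisely this evaluation: one must identify exactly which frequencies $k$ survive, and the nonvanishing (indeed the sign) of $T$ is not apparent a priori without either this Fourier computation or an equivalent explicit summation.
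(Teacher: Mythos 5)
Your proof is correct, and it differs from the paper's in both of its main steps. The paper works directly with the Cauchy-criterion sums $S(a/q;M,N)$, splits them into residue classes modulo $q$, and invokes the alternating-series tail bound for odd $q$ and an integral-comparison lemma for even $q$; you instead decompose the periodic coefficient sequence as its mean $\bar c$ plus a zero-mean periodic part and dispose of the latter once and for all with Dirichlet's test, reducing everything to the single question of whether $\bar c$ vanishes. For the crux --- the nonvanishing of $T=\sum_{m=1}^{q}(-1)^m|\sin(m\pi a/q)|$ for even $q$ --- the paper uses the closed form for $\sum_j\sin(j\theta)$, while you use the Fourier expansion of $|\sin(\pi x)|$ together with an orthogonality computation; your value $-\frac{4q}{\pi}\sum_{s\ \mathrm{odd}}(q^2s^2-1)^{-1}$ equals $-\tan(\pi/(2q))$ by the partial-fraction expansion of the tangent, so the two evaluations agree up to a sign slip in the paper's \eqref{2.9} (where $\sin(\frac{\pi}{2}+x)$ is treated as $-\cos x$); the correct value is the negative one you obtain, and the discrepancy is harmless to either argument since only nonvanishing is needed. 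Your reduction is cleaner and more self-contained for the rational case; what the paper's heavier, quantitative treatment of $S(a/q;M,N)$ buys is the machinery reused for irrational $\alpha$ in Theorems~\ref{th2} and~\ref{th6}, whereas the mean-plus-Dirichlet decomposition relies essentially on exact periodicity and does not extend.
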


When $\alpha$ is irrational, the convergence of \eqref{i.2} depends on the quality of the rational approximations to $\alpha$. Thus, before we can state our results concerning irrational $\alpha$, we need to introduce some terminology. For $\alpha \in \mathbb R$, let $\| \alpha \|$ denote the distance from $\alpha$ to the nearest integer, i.e.,
\[
  \| \alpha \| = \min\big\{ |\alpha - n| : n \in \mathbb Z \big\}.
\]
Given $\alpha \not\in \mathbb Q$, one can construct a unique sequence $\{ a_n/q_n \}_{n = 1}^\infty$ of rational numbers such that $|q_n\alpha - a_n| = \|q_n \alpha \|$ and, for all $n \ge 2$,
\[
  \min\big\{ \| q\alpha \| : 0 < q < q_n \big\} = \|q_{n-1}\alpha \| > \|q_n\alpha\|.
\]
The rational numbers $a_n/q_n$ are called \emph{best rational approximations} to $\alpha$. The reader can find the detailed construction of the sequence $\{ a_n/q_n \}_{n=1}^\infty$ and some of its basic properties in Cassels~\cite[\S I.2]{Ca57}. In particular, it follows easily from the properties listed in~\cite{Ca57} that
\begin{equation}\label{i.3}
  \frac 1{2q_nq_{n+1}} < \left|\alpha - \frac {a_n}{q_n} \right| < \frac 1{q_nq_{n+1}}.
\end{equation}

We can now state our main theorem.

\begin{theorem}\label{th2}
  Suppose that $f \in \mathfrak F$ and $\alpha \not\in \mathbb Q$, and let $\{ a_n/q_n \}_{n = 1}^{\infty}$ be the sequence of best rational approximations to $\alpha$. Let $\mathcal Q_{\alpha}$ be the set of even denominators $q_n$ such that $q_{n+1} \ge 2q_n$. If the series 
  \begin{equation}\label{i.4}
    \sum_{q_n \in \mathcal Q_{\alpha}} \frac {1}{q_n^2} \int_1^{q_{n+1}} f(x) \, dx
  \end{equation}
  converges, then so does the series \eqref{i.2}.
\end{theorem}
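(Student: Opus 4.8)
The plan is to expand $|\sin(k\pi\alpha)|$ in its Fourier series and thereby reduce the problem to a family of one–dimensional cosine sums, one for each frequency $m$, whose sizes are governed by how well $\alpha$ is approximated by rationals with \emph{even} denominator. First I would record the uniformly convergent expansion
\[
  |\sin(\pi t)| = \frac{2}{\pi} - \frac{4}{\pi}\sum_{m=1}^{\infty}\frac{\cos(2\pi m t)}{4m^2-1},
\]
substitute $t = k\alpha$, and sum against $(-1)^kf(k)$ over $k \le N$. Since $(-1)^k\cos(2\pi mk\alpha) = \cos(k\theta_m)$ with $\theta_m = \pi(1+2m\alpha)$, the partial sum becomes
\[
  S_N = \frac{2}{\pi}\sum_{k\le N}(-1)^kf(k) - \frac{4}{\pi}\sum_{m=1}^{\infty}\frac{U_m^{(N)}}{4m^2-1}, \qquad U_m^{(N)} = \sum_{k\le N}f(k)\cos(k\theta_m).
\]
For each fixed $N$ the interchange of the two sums is legitimate because the $m$-sum converges absolutely. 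The first sum converges by the alternating series test, so everything hinges on the second.

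Next I would estimate $U_m^{(N)}$ by summation by parts. The partial sums of $\cos(k\theta_m)$ over any window are bounded by $1/|\sin(\theta_m/2)| = 1/\Delta_m$, where $\Delta_m = |\cos(\pi m\alpha)|$, and are also trivially at most the window length. Feeding both bounds into Abel summation and using that $f$ decreases to $0$ gives, uniformly in $N$,
\[
  \bigl| U_m^{(N)} \bigr| \ll f(1) + \int_1^{1/\Delta_m} f(x)\,dx .
\]
If I can show that $\sum_m (4m^2-1)^{-1}\bigl(f(1)+\int_1^{1/\Delta_m} f\bigr)<\infty$ under hypothesis \eqref{i.4}, then dominated convergence for series lets me pass to the limit $N\to\infty$ in the identity above, and the convergence of \eqref{i.2} follows. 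The term with $f(1)$ is harmless since $\sum_m(4m^2-1)^{-1}<\infty$.

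The hard part is the remaining sum $\Sigma = \sum_m (4m^2-1)^{-1}\int_1^{1/\Delta_m} f$, and this is where the Diophantine input enters. Writing $\Sigma = \int_1^\infty f(x)\sum_{m:\,\Delta_m\le 1/x}(4m^2-1)^{-1}\,dx$ by interchanging the order, I would control the inner counting sum. Here $\Delta_m$ is small exactly when $m\alpha$ lies close to a half-integer, that is, when $2m\alpha$ lies close to an \emph{odd} integer. Since $\gcd(a_n,q_n)=1$ forces $a_n$ to be odd whenever $q_n$ is even, the number $q_n\alpha$ is then close to the odd integer $a_n$, and \eqref{i.3} gives $\Delta_{q_n/2}\asymp\|q_n\alpha\|\asymp q_{n+1}^{-1}$. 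More generally the frequencies with $\Delta_m$ small are the odd multiples $m=(2j+1)q_n/2$ of the even convergents, with $\Delta_m\asymp(2j+1)q_{n+1}^{-1}$. The crux, which I expect to be the one genuinely delicate point, is the lemma that \emph{no other} $m$ contribute; I would deduce it from the standard structure of the set $\{M:\|M\alpha\|\le\delta\}$ recorded in \cite{Ca57} (equivalently, from the three-distance theorem). Granting it and summing the geometric-type series in $j$ yields $\sum_{m:\,\Delta_m\le 1/x}(4m^2-1)^{-1}\ll\sum_{q_n\text{ even},\,q_{n+1}\gg x} q_n^{-2}$, and interchanging back gives $\Sigma\ll\sum_{q_n\text{ even}} q_n^{-2}\int_1^{q_{n+1}} f$.

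Finally I would split this last sum over $\mathcal Q_\alpha$ and its complement. The part over $\mathcal Q_\alpha$ is precisely \eqref{i.4} and is finite by hypothesis. For an even $q_n\notin\mathcal Q_\alpha$ one has $q_{n+1}<2q_n$, so $\int_1^{q_{n+1}} f\le 2q_nf(1)$ and hence $q_n^{-2}\int_1^{q_{n+1}} f\ll f(1)/q_n$; since $q_{n+1}\ge q_n+q_{n-1}$ forces at least geometric growth of the $q_n$, the series $\sum_n q_n^{-1}$ converges and this part is summable unconditionally. Combining the two pieces bounds $\Sigma$, which validates the passage to the limit and completes the proof.
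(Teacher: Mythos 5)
Your overall architecture is the same as the paper's: expand $|\sin(\pi n\alpha)|$ in its Fourier cosine series, apply Abel summation in $n$ to each frequency, and reduce everything to a weighted sum of $\Delta_m^{-1}\asymp\|m\alpha+\frac12\|^{-1}$ over the frequencies $m$, with the Diophantine input entering through the even convergent denominators. The preliminary steps are sound: the identity for $S_N$, the bound $|U_m^{(N)}|\ll f(1)+\int_1^{1/\Delta_m}f$, the dominated-convergence passage to the limit, the identification of the exceptional frequencies $m=(2j+1)q_n/2$ (for even $q_n$, where $a_n$ is forced to be odd) with $\Delta_m\asymp(2j+1)/q_{n+1}$, and the final splitting over $\mathcal Q_\alpha$ versus even $q_n$ with $q_{n+1}<2q_n$.

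The gap is exactly at the point you flag as the crux, and it is larger than you suggest. The lemma that ``no other $m$ contribute'' is false, and so is the resulting counting bound $\sum_{m:\,\Delta_m\le 1/x}(4m^2-1)^{-1}\ll\sum_{q_n\text{ even},\,q_{n+1}\gg x}q_n^{-2}$. The exceptional $m$ are indeed the only ones with $\Delta_m$ as small as $1/q_{n+1}$, but once $x\lesssim q_n$ each window of length $q_n$ inside the block $q_n\lesssim m\lesssim q_{n+1}$ contains on the order of $1+q_n/x$ frequencies with $\Delta_m\le 1/x$; these come from all residues of $2a_nm+q_n$ modulo $2q_n$, not just the one giving $2m\equiv q_n$. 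Concretely, for $\alpha$ whose convergent denominators are all odd (take $a_1=1$ and all later partial quotients even), your right-hand side vanishes identically while the left-hand side is positive for every $x$, so your bound would yield $\Sigma=0$, which is absurd. What is actually true is
\[
  \Sigma \ll f(1)\sum_{n} \frac{\ln q_n}{q_n} + \sum_{q_n\text{ even}} \frac{1}{q_n^2}\int_1^{q_{n+1}} f(x)\,dx,
\]
and establishing the first, unconditionally convergent, piece requires the estimate $\sum_{m\asymp q} m^{-2}\Delta_m^{-1}\ll q^{-1}\ln q$, obtained by sorting the $m$ in a block into residue classes modulo $2q$ and summing the resulting harmonic-type series over residues; this is precisely the content of \S\ref{s4.2} and \S\ref{s4.3} of the paper (the bounds \eqref{4.10} and \eqref{4.11}) and is where most of the technical work lies. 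With that lemma supplied your argument closes; without it, the passage from the counting claim to $\Sigma\ll\sum_{q_n\text{ even}}q_n^{-2}\int_1^{q_{n+1}}f$ does not hold.
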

 
By combining Theorem \ref{th2} with various facts about Diophantine approximation, we obtain the following corollaries.

\begin{corollary}\label{th3}
  There is a set $D \subset \mathbb R$, with Lebesgue measure zero, such that the series \eqref{i.2} converges for all real $\alpha \notin D$ and all $f \in \mathfrak F$.
\end{corollary}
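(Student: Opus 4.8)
The plan is to construct a single Lebesgue-null set $D$ that works simultaneously for every $f \in \mathfrak F$, by reducing to a metric statement about the growth of the best-approximation denominators $q_n = q_n(\alpha)$ and then invoking Theorems~\ref{th1} and~\ref{th2}. The genuinely divergent instances are the rationals with even denominator, where Theorem~\ref{th1} forces divergence; since these form a countable set, it is harmless to simply take $\mathbb Q \subseteq D$. It then remains to treat irrational $\alpha$ through Theorem~\ref{th2}: I want a null set of irrationals outside of which the series~\eqref{i.4} converges \emph{for every} $f \in \mathfrak F$.

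The step that decouples $\alpha$ from $f$ is the observation that, $f$ being decreasing, one has the crude bound $\int_1^{q_{n+1}} f(x)\,dx \le f(1)\,q_{n+1}$, so that
\[
  \frac{1}{q_n^2}\int_1^{q_{n+1}} f(x)\,dx \le f(1)\,\frac{q_{n+1}}{q_n^2}.
\]
As $f(1)$ is a finite constant depending only on $f$, the series~\eqref{i.4} converges for \emph{every} $f \in \mathfrak F$ as soon as $\sum_{q_n \in \mathcal Q_\alpha} q_{n+1}/q_n^2$ converges, and this last condition involves $\alpha$ alone. I would therefore set
\[
  D = \mathbb Q \cup \Big\{\, \alpha \notin \mathbb Q : \sum_{n \ge 1} \frac{q_{n+1}}{q_n^2} = \infty \,\Big\},
\]
the sum over all $n$ majorising the sub-sum over $\mathcal Q_\alpha$, and reduce the corollary to the claim that this $D$ is Lebesgue-null.

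For the metric estimate I would write $q_{n+1} = a_{n+1}q_n + q_{n-1} \le (a_{n+1}+1)q_n$, where the $a_{n+1}$ are the partial quotients of $\alpha$, and combine this with the deterministic geometric lower bound $q_n \ge \phi^{\,n-1}$, $\phi = (1+\sqrt5)/2$, which follows from $q_{n+1} \ge q_n + q_{n-1}$. This yields $q_{n+1}/q_n^2 \le (a_{n+1}+1)\,\phi^{-(n-1)}$. Now the Borel--Bernstein theorem, applied with the summable weight $n^{-2}$, shows that for almost every $\alpha$ one has $a_n < n^2$ for all sufficiently large $n$; on that full-measure set the general term is eventually dominated by $(n+1)^2\phi^{-(n-1)}$, which is summable. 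Hence $\sum_{n} q_{n+1}/q_n^2 < \infty$ almost everywhere, so $D$ has measure zero. The whole weight of the argument rests on this final, metric step---quantifying how often and how badly the partial quotients can be large---while the reduction above is essentially bookkeeping; the only care needed elsewhere is to notice that the crude integral bound, though lossy for rapidly decreasing $f$, is exactly what one wants here since it is the slowly decreasing $f$ that govern convergence.
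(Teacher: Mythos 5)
Your argument is correct, and its core reduction is the same as the paper's: you bound $\int_1^{q_{n+1}} f(x)\,dx \le f(1)q_{n+1}$ to decouple $f$ from $\alpha$, and then show that a single metric condition on the denominators holds almost everywhere. The differences are in the two supporting steps. For the metric input, the paper quotes Khinchin's theorem directly in the form ``for any $\delta>0$, almost every $\alpha$ satisfies $q_{n+1}\le q_n^{1+\delta}$ for all large $n$,'' and then sums $\sum_{q\in\mathcal Q_\alpha} q^{-1/2}$ using the exponential growth of $\mathcal Q_\alpha$; you instead derive the needed summability from the Borel--Bernstein theorem ($a_n<n^2$ eventually, a.e.) combined with $q_{n+1}\le(a_{n+1}+1)q_n$ and the Fibonacci-type bound $q_n\ge\phi^{n-1}$. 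These are equivalent in strength here, and your version is slightly more self-contained since Khinchin's statement is itself usually proved via Borel--Bernstein. Second, you explicitly place $\mathbb Q$ inside $D$; this is actually a point on which your write-up is more careful than the paper's, since Theorem~\ref{th1} shows the rationals with even denominator are genuine divergence points, Theorem~\ref{th2} only covers irrational $\alpha$, and the paper's set $D_{1/2}$ as defined contains no rationals at all. Both fixes cost nothing in measure, so the conclusion stands either way.
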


\begin{corollary}\label{th4}
  Suppose that $f \in \mathfrak F$ and $\alpha$ is an algebraic irrationality. Then the series \eqref{i.2} converges.
\end{corollary}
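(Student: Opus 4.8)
The plan is to derive Corollary~\ref{th4} from Theorem~\ref{th2} by showing that, for an algebraic irrational $\alpha$, the denominators $q_n$ of the best rational approximations grow so regularly that the series~\eqref{i.4} converges for \emph{every} $f \in \mathfrak F$. The one deep ingredient is Roth's theorem: if $\alpha$ is an algebraic irrational, then for each $\eps > 0$ there is a constant $c = c(\alpha, \eps) > 0$ such that $|\alpha - p/q| > c\,q^{-2-\eps}$ for all rationals $p/q$. I would fix $\eps = \tfrac12$ once and for all.

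First I would convert this lower bound on the quality of approximation into an upper bound on the growth of the denominators. Applying Roth's theorem with $p/q = a_n/q_n$ and comparing with the right-hand inequality in~\eqref{i.3} gives
\[
  c\,q_n^{-2-\eps} < \left|\alpha - \frac{a_n}{q_n}\right| < \frac{1}{q_n q_{n+1}},
\]
whence $q_{n+1} < c^{-1} q_n^{1+\eps}$; that is, consecutive denominators are polynomially related. I would then bound a single summand of~\eqref{i.4}: since $f \in \mathfrak F$ is positive and decreasing, $\int_1^{q_{n+1}} f(x)\,dx \le f(1)\,q_{n+1} < c^{-1} f(1)\,q_n^{1+\eps}$, so each term is at most $c^{-1} f(1)\,q_n^{\eps - 1}$. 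The virtue of the crude estimate $\int_1^{T} f \le f(1)T$ is that it is uniform over the entire class $\mathfrak F$, which is exactly what the corollary demands.

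Finally I would use that the best rational approximations coincide with the continued-fraction convergents, so that $q_n \ge q_{n-1} + q_{n-2}$ and hence $q_n \ge c'\gamma^n$ with $\gamma = (1+\sqrt5)/2$. With the choice $\eps = \tfrac12$ the preceding step bounds each summand by $c^{-1}f(1)\,q_n^{-1/2} \ll \gamma^{-n/2}$, so the full series $\sum_n q_n^{-2}\int_1^{q_{n+1}} f$ is dominated by a convergent geometric series. Its subseries~\eqref{i.4}, taken over $q_n \in \mathcal Q_\alpha$, therefore converges, and Theorem~\ref{th2} yields the convergence of~\eqref{i.2}.

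I expect the only genuine obstacle to be the appeal to Roth's theorem itself. Liouville's inequality alone controls $q_{n+1}$ merely by $q_n^{d-1}$, where $d = \deg\alpha$, which makes each summand $\ll q_n^{\,d-3}$ and hence fails to be summable as soon as $d \ge 3$. It is precisely the reduction of the approximation exponent from $d$ all the way down to $2 + \eps$ that forces convergence for algebraic numbers of \emph{every} degree; everything else in the argument is routine estimation.
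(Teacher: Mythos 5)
Your proposal is correct and follows essentially the same route as the paper: Roth's theorem gives $q_{n+1} \ll q_n^{3/2}$, the crude bound $\int_1^{q_{n+1}} f \le f(1)q_{n+1}$ makes each term of \eqref{i.4} at most a constant times $q_n^{-1/2}$, and the (at least) exponential growth of the denominators forces convergence, so Theorem~\ref{th2} applies. The paper phrases this by reducing to the argument of Corollary~\ref{th3}, but the content is identical.
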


\begin{corollary}\label{th5}
  The series \eqref{i.1} converges.
\end{corollary}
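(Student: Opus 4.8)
The plan is to realize \eqref{i.1} as a special case of \eqref{i.2} and then verify the hypothesis of Theorem~\ref{th2}. Writing $|\sin n| = |\sin(n\pi\alpha)|$ with $\alpha = 1/\pi$ and taking $f(x) = 1/x$, I note that $f \in \mathfrak F$ (it is continuous and decreasing, tends to $0$, and $\int_1^\infty x^{-1}\,dx = \infty$), while $\alpha = 1/\pi$ is irrational because $\pi$ is. Thus \eqref{i.1} is precisely the series \eqref{i.2} for this choice of $f$ and $\alpha$, and by Theorem~\ref{th2} it suffices to show that the series \eqref{i.4} converges.

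First I would reduce \eqref{i.4} to a statement about the growth of the denominators $q_n$ of the best rational approximations to $1/\pi$. With $f(x) = 1/x$ one has $\int_1^{q_{n+1}} f(x)\,dx = \log q_{n+1}$, so \eqref{i.4} is a subseries, taken over $q_n \in \mathcal Q_\alpha$, of the series of positive terms
\[
  \sum_{n} \frac{\log q_{n+1}}{q_n^2}.
\]
It is therefore enough to prove that this larger series converges.

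The crucial input is Mahler's theorem \cite{Ma53}, which shows that $\pi$ has finite irrationality measure: there are constants $\mu$ and $C > 0$ with $|\pi - p/q| > C q^{-\mu}$ for all rationals $p/q$. Since a good rational approximation $a_n/q_n$ to $1/\pi$ reciprocates to a good approximation $q_n/a_n$ to $\pi$, the irrationality measure is preserved under reciprocation, and one obtains $|\,1/\pi - a_n/q_n| > C' q_n^{-\mu}$ for a suitable $C' > 0$. Comparing this lower bound with the upper bound $|\alpha - a_n/q_n| < (q_n q_{n+1})^{-1}$ from \eqref{i.3}, I get $q_{n+1} < (C')^{-1} q_n^{\mu - 1}$, whence $\log q_{n+1} = O(\log q_n)$. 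Because the $q_n$ are convergent denominators, they satisfy $q_{n+1} \ge q_n + q_{n-1}$ and so grow at least as fast as the Fibonacci numbers, i.e. exponentially; consequently $\sum_n (\log q_n)/q_n^2$ converges by comparison with a geometric series, and \eqref{i.4} converges \emph{a fortiori}. Theorem~\ref{th2} then delivers the convergence of \eqref{i.1}.

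The main obstacle is the passage from Mahler's bound on rational approximations to $\pi$ to a usable bound on the denominators $q_{n+1}$ associated with $1/\pi$: one must confirm that finite irrationality measure is inherited under reciprocation and then combine the resulting polynomial lower bound with \eqref{i.3} to control $q_{n+1}$ in terms of $q_n$. Once this polynomial control is in hand, the remainder is routine, since the exponential growth of the $q_n$ comfortably absorbs the logarithmic factor $\log q_{n+1}$.
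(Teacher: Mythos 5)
Your proposal is correct and follows essentially the same route as the paper: both apply Theorem~\ref{th2} with $f(x)=1/x$ and $\alpha=1/\pi$, transfer Mahler's lower bound $|\pi - a/q| \ge q^{-42}$ to $1/\pi$ by reciprocation, combine it with \eqref{i.3} to get polynomial control $q_{n+1} = O_\alpha(q_n^{41})$, and conclude by comparing \eqref{i.4} with a convergent series using the (at least) exponential growth of the $q_n$. The only difference is cosmetic: you work with a generic irrationality measure $\mu$ where the paper uses Mahler's explicit exponent $42$.
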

  
Theorem \ref{th2} provides a sufficient condition for convergence of alternating series of the form~\eqref{i.2}. It is natural to ask how far is this condition from being also necessary. A closer look at the special case $f(x) = x^{-p}$, $0 < p \le 1$, reveals that sometimes the convergence of \eqref{i.4} is, in fact, equivalent to the convergence of \eqref{i.2}. We have the following result.

\begin{theorem}\label{th6}
  Suppose that $\alpha \not\in \mathbb Q$, and let $\{ a_n/q_n \}_{n = 1}^{\infty}$ and $\mathcal Q_{\alpha}$ be as in Theorem~\ref{th2}. When $0 < p \le 1$, the series 
  \begin{equation}\label{i.a} 
    \sum_{n=1}^{\infty} \frac {(-1)^n|\sin(n\pi\alpha)|}{n^p} 
  \end{equation}
  converges if and only if the series   
  \begin{equation}\label{i.b} 
    \sum_{q_n \in \mathcal Q_{\alpha}} \frac {1}{q_n^2} \int_1^{q_{n+1}} x^{-p} \, {dx}
  \end{equation}
  does.
\end{theorem}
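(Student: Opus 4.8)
The \emph{if} direction is immediate from Theorem~\ref{th2}: for $0<p\le1$ the function $f(x)=x^{-p}$ lies in $\mathfrak F$ (it is continuous, decreasing, tends to $0$, and $\int_1^\infty x^{-p}\,dx=\infty$), and the series \eqref{i.4} for this $f$ is precisely \eqref{i.b}. Thus convergence of \eqref{i.b} forces convergence of \eqref{i.a}. It remains to prove the converse, which I will do in contrapositive form: assuming \eqref{i.b} diverges, I will show that \eqref{i.a} diverges.

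The engine of the argument is a \emph{mean value over a near-period}. Write $c_m=(-1)^m|\sin(m\pi\alpha)|$ and $S_N=\sum_{m\le N}c_m$. On the block $q_n\le m<q_{n+1}$, inequality \eqref{i.3} gives $m\pi|\alpha-a_n/q_n|<\pi/q_n$, so $c_m$ is closely shadowed by the genuinely periodic sequence $(-1)^m|\sin(m\pi a_n/q_n)|$. Expanding $|\sin\pi t|=\frac2\pi-\frac4\pi\sum_{k\ge1}\frac{\cos2\pi kt}{4k^2-1}$ and averaging over a full period, one finds that the periodic mean
\[
  M(q)=\frac1{2q}\sum_{m=1}^{2q}(-1)^m\Bigl|\sin\frac{m\pi a}{q}\Bigr|
\]
vanishes when $q$ is odd and equals $-\frac4\pi\sum_{j\ \mathrm{odd}}(q^2j^2-1)^{-1}\sim-\frac{\pi}{2q^2}$ when $q$ is even, the resonant frequencies being $k=\tfrac q2\cdot(\text{odd})$. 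This fixed negative sign and the order $q^{-2}$ are exactly the source of the weight $q_n^{-2}$ in \eqref{i.b}.

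Passing from $S_N$ to the weighted sum by Abel summation, the increment of the partial sums of \eqref{i.a} across the block is $M(q_n)\int_{q_n}^{q_{n+1}}x^{-p}\,dx$ up to a summable error. For $q_n\in\mathcal Q_\alpha$ (even, with $q_{n+1}\ge2q_n$) these increments are negative of size $\asymp q_n^{-2}\int_{q_n}^{q_{n+1}}x^{-p}\,dx$; since the $q_n$ grow at least geometrically, $\sum_n q_n^{-2}\int_1^{q_n}x^{-p}\,dx<\infty$, and as the series $\sum_{\mathcal Q_\alpha}q_n^{-2}\int_{q_n}^{q_{n+1}}x^{-p}\,dx$ and \eqref{i.b} differ by a subseries of this convergent series, they converge or diverge together. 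The remaining blocks contribute absolutely convergently: odd $q_n$ have $M(q_n)=0$, while even $q_n\notin\mathcal Q_\alpha$ satisfy $q_{n+1}<2q_n$ and so contribute $O(q_n^{-1-p})$. Hence, if \eqref{i.b} diverges, the negative increments accumulate without cancellation and the partial sums of \eqref{i.a} tend to $-\infty$, so \eqref{i.a} diverges.

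The hard part is to make the block increment a genuine two-sided estimate with a controlled sign. The naive replacement of $\alpha$ by $a_n/q_n$ costs $\tfrac{m\pi}{q_nq_{n+1}}$ per term, i.e.\ an error $\asymp q_n^{-1}\int_{q_n}^{q_{n+1}}x^{-p}\,dx$ across a block---a factor $q_n$ \emph{larger} than the main term---so the triangle inequality is useless and one must exhibit cancellation. The remedy is to expand $|\sin\pi t|$ in its Fourier series from the outset, turning both the main term and the error into oscillatory sums $\sum_m(-1)^m\cos(2\pi km\alpha)m^{-p}$; the near-resonant frequencies $k\approx\tfrac j2q_n$ produce the signed main term, while for the non-resonant frequencies Dirichlet's test gains a factor from $\|\cdot\|$ being bounded away from its critical value. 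Carrying out this separation uniformly---equivalently, showing that the discrepancy $S_r-M(q_n)(r-q_n)$ is small enough to survive the weighting---is the main technical burden, and it is precisely here that the separation of scales furnished by $q_{n+1}\ge2q_n$ is used.
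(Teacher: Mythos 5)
The \emph{if} direction is fine and is exactly how the paper handles it: $f(x)=x^{-p}\in\mathfrak F$ for $0<p\le 1$, and Theorem~\ref{th2} applies verbatim. For the converse, your outline identifies the right mechanism --- the resonant Fourier frequencies $k\equiv q/2\pmod q$ produce a signed main term of size $q^{-2}\int_{q_n}^{q_{n+1}}x^{-p}\,dx$ per block, and the naive triangle-inequality error is a factor $q_n$ too large, so cancellation must be exhibited --- but you then explicitly defer that cancellation (``the main technical burden'') rather than proving it. That deferred step is essentially the entire content of the hard direction, and it is not routine. Two estimates are missing. First, the non-resonant frequencies: one must show that for each block the terms with $2ak\not\equiv q\pmod{2q}$ contribute $O(q^{-1}\log q)$, which requires the spacing properties of $\|k\alpha+1/2\|$ coming from the best-approximation structure (the paper's \S\ref{s4.2}--\ref{s4.3}, estimates \eqref{4.10} and \eqref{4.11}); your appeal to ``Dirichlet's test gains a factor'' does not by itself produce a bound summable over blocks. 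Second, the resonant main term must be a genuine \emph{lower} bound, not just an upper bound. After the resonance is isolated, the block contribution reduces to integrals $\int_{\nu_k}^{\nu_k+\mu_k}t^{-p}\cos t\,dt$ with $\nu_k=\pi k|\theta|N$, $\mu_k=\pi k|\theta|M$, and these are bounded below by a positive constant only when $\nu_k$ is small and $\mu_k$ is large; for generic $M,N$ they can be small or of the wrong sign. The paper must restrict $N$ to the form $r_j-b$, restrict $M$ to $\lceil q^{1+p/3}\rceil$ with $q$ ranging over the subset $\mathcal Q_\alpha'$ on which the integral in \eqref{i.b} is not too small, and invoke the positivity of $A_p=\Gamma(1-p)\sin(\pi p/2)$ from Lemma~\ref{l7}; none of this appears in your sketch, so the claim that ``the negative increments accumulate without cancellation'' is unsupported.

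A smaller but real problem: you dispose of the even $q_n$ with $q_{n+1}<2q_n$ as contributing $O(q_n^{-1-p})$ by applying the periodic-mean heuristic to a block of length less than one period, where that heuristic says nothing. The paper avoids this by regrouping consecutive denominators into the subsequence $r_{j+1}\ge 2r_j$ and estimating each regrouped block through the largest best-approximation denominator $q<r_j$ --- this is precisely where the condition $q_{n+1}\ge 2q_n$ in the definition of $\mathcal Q_\alpha$ earns its keep, and it needs to be done explicitly rather than asserted. (One remark in your favor: your sign $M(q)\sim-\pi/(2q^2)$ for even $q$ is correct; the paper's own computation in \eqref{2.9} has a sign slip, as the case $q=2$ shows, though this does not affect any of its conclusions.)
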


In particular, it follows from Theorem \ref{th6} that the divergence set of \eqref{i.2} can be uncountable. Indeed, recalling a classical construction used by Liouville \cite{Li51} to give the first examples of transcendental numbers, we deduce the following corollary.

\begin{corollary}\label{th7}
  There is an uncountable set $L \subset \mathbb R$, dense in $\mathbb R$, such that the series \eqref{i.a} diverges for all $\alpha \in L$ and all $p \in (0,1]$.
\end{corollary}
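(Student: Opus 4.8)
The plan is to invoke Theorem~\ref{th6} to replace the analytic question by one about continued fractions, and then to manufacture $L$ by a branching construction in the spirit of Liouville's transcendentals. I would first record a reduction. For $0 < p \le 1$ and $x \ge 1$ one has $x^{-p} \ge x^{-1}$, so each summand of \eqref{i.b} obeys
\[
  \frac{1}{q_n^2}\int_1^{q_{n+1}} x^{-p}\,dx \;\ge\; \frac{1}{q_n^2}\int_1^{q_{n+1}} x^{-1}\,dx \;=\; \frac{\ln q_{n+1}}{q_n^2}.
\]
Hence, if $\mathcal Q_\alpha$ contains infinitely many $q_n$ with $\ln q_{n+1} \ge q_n^2$, then \eqref{i.b} has infinitely many terms that are $\ge 1$ and so diverges for \emph{every} $p \in (0,1]$; by Theorem~\ref{th6}, \eqref{i.a} then diverges for every such $p$. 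It therefore suffices to produce uncountably many, densely distributed irrationals $\alpha$ whose sequence of best approximation denominators contains infinitely many even $q_n$ with $q_{n+1} \ge \exp(q_n^2)$.

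To build such $\alpha$ I would work with the continued fraction expansion $\alpha = [a_0; a_1, a_2, \dots]$, whose convergent denominators are exactly the best approximation denominators and satisfy $q_{n+1} = a_{n+1}q_n + q_{n-1}$ and $\gcd(q_n, q_{n+1}) = 1$. The coprimality forbids two consecutive even denominators, but it also makes the parities steerable: if $q_n$ is even then $q_{n-1}$ is odd, so $q_{n+1} = a_{n+1}q_n + q_{n-1}$ is odd \emph{irrespective of} $a_{n+1}$, and then $q_{n+2} = a_{n+2}q_{n+1} + q_n$ is even precisely when $a_{n+2}$ is even. Starting from an initial segment that produces one even denominator, I would iterate the pattern ``$a_{n+1}$ enormous, $a_{n+2} = 2$'', which yields even denominators at every second index. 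At each even $q_n$ I take $a_{n+1}$ large enough that $q_{n+1} \ge \exp(q_n^2)$; since then $a_{n+1} \ge 2$ forces $q_{n+1} \ge 2q_n$, the denominator $q_n$ lands in $\mathcal Q_\alpha$ and contributes a term $\ge 1$ to \eqref{i.b} for every $p \in (0,1]$.

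For uncountability I would branch at the enormous partial quotients. Because the parity of $q_{n+1}$ is independent of $a_{n+1}$ when $q_n$ is even, I may replace each such $a_{n+1}$ by a binary choice in $\{N_n, N_n+1\}$, where $N_n > \exp(q_n^2)$ is a threshold depending on the earlier choices, without disturbing the parity pattern or the bound $q_{n+1} \ge \exp(q_n^2)$. Distinct branches give distinct infinite continued fractions, hence distinct irrationals, so the construction yields a set $L$ of cardinality $2^{\aleph_0}$, each member of which makes \eqref{i.a} diverge for all $p \in (0,1]$. Density is obtained by prescribing the initial block: given $\beta \in \mathbb R$, I would match the first several partial quotients of $\beta$ and only then append the branching tail, placing an element of $L$ in any prescribed neighborhood of $\beta$.

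The analytic reduction in the first step is routine, and the growth requirement $q_{n+1} \ge \exp(q_n^2)$ costs nothing, since the partial quotients are at our disposal. The \textbf{main difficulty} is the simultaneous control of the parity of $q_n$ --- which cannot be assigned freely, being constrained by the recurrence and by $\gcd(q_n, q_{n+1}) = 1$ --- together with this explosive growth and the freedom needed for branching. The resolution hinges on the single structural observation that, once $q_n$ is even, the parity of $q_{n+1}$ no longer depends on $a_{n+1}$; this decouples the Liouville-type growth (carried by $a_{n+1}$) and the uncountable branching from the parity bookkeeping (carried entirely by the intermediate ``$=2$'' steps).
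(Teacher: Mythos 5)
Your argument is correct, but it reaches Corollary~\ref{th7} by a genuinely different route than the paper. Both proofs funnel through Theorem~\ref{th6} and are Liouville-inspired, but the paper never touches continued fractions: it takes the explicit decimal series $\lambda = a/q + \sum_{k \ge m} d_k 10^{-k!}$ with digits $d_k \in \{1,3\}$, shows that the truncations $\lambda_N$ are eventually best rational approximations (the error $|\lambda - \lambda_N|$ is so small that \eqref{i.3} forces $\lambda_N = a_n/q_n$), reads off that their reduced denominators are even and that $q_{n+1} \ge c\,10^{N\cdot N!}$, and gets uncountability from the digit choices and density from the shift $a/q$. You instead prescribe the continued fraction directly, steering parity through the recurrence $q_{n+1} = a_{n+1}q_n + q_{n-1}$ and the coprimality of consecutive denominators, and branching on the large partial quotients. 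Your version buys two things: the single growth condition $\ln q_{n+1} \ge q_n^2$, combined with $x^{-p} \ge x^{-1}$ on $[1,\infty)$, kills \eqref{i.b} for every $p \in (0,1]$ at once, whereas the paper runs the construction twice (factorial exponents for $p<1$, a tower $b_{k+1} = 100^{b_k}$ for $p=1$); and you get direct, transparent control over which $q_n$ land in $\mathcal Q_\alpha$, where the paper's claim that the reduced denominators of the $\lambda_N$ are even is left to the reader. The paper's version buys concreteness (the numbers are written down in Liouville's original style) and avoids invoking the dictionary between convergents and best approximations. Two details you should still write out: after matching an initial block of an arbitrary $\beta$ for density, you need one further partial quotient of suitable parity to reach the first even denominator (a one-line case check on the parities of $q_n$ and $q_{n-1}$ shows this is always possible), and you should record that the best-approximation denominators of the paper coincide with the convergent denominators, which is standard (Cassels, \S I.2) and unproblematic here since your partial quotients are eventually at least $2$.
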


\section{Some lemmas from calculus}
\label{s1}

In this section, we collect several technical lemmas needed in the proofs of the theorems. We also need to introduce a couple of pieces of notation. Throughout the remainder of the paper, we write $e(\theta) = e^{2\pi i\theta}$. We also use Landau's \emph{big-$O$} notation: if $B > 0$, we write $A = O(B)$ if there exists a constant $c > 0$ such that $|A| \le cB$. In a few places, we will also encounter inequalities like $|A| \le c(\alpha)B$, where $c(\alpha) > 0$ depends solely on a particular fixed parameter $\alpha$. In such situations, it is often convenient to slightly abuse the standard terminology and talk of a ``constant depending only on $\alpha$'' and to write $A = O_{\alpha}(B)$. 

\begin{lemma}\label{l0}
  Suppose that $f \in \mathfrak F$ and $1 \le X < Y$. Then
  \[
    \left| \sum_{X \le n \le Y} (-1)^nf(n) \right| \le f(X).
  \]
\end{lemma}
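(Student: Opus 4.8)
The plan is to reduce the claim to the classical Leibniz estimate for alternating series with monotone terms. Since $f$ is decreasing to $0$, it is positive on $[1,\infty)$, so $f(X) > 0$; in particular, if the interval $[X,Y]$ contains no integers the sum is empty and the inequality holds trivially. Otherwise, let $m = \lceil X \rceil$ and $M = \lfloor Y \rfloor$, so that the sum in question is $S = \sum_{n=m}^{M} (-1)^n f(n)$. Because $f$ is decreasing and $X \le m$, we have $f(m) \le f(X)$, and it therefore suffices to prove the sharper bound $|S| \le f(m)$.

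To do this I would factor out the leading sign. Writing $a_j = f(m+j)$ for $0 \le j \le M-m$, the sequence $a_0 \ge a_1 \ge \cdots \ge a_{M-m} \ge 0$ is nonnegative and nonincreasing, and $|S| = |T|$, where $T = \sum_{j=0}^{M-m} (-1)^j a_j$. The goal then becomes the two-sided estimate $0 \le T \le a_0 = f(m)$.

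Both inequalities follow by grouping consecutive terms in the two natural ways. For the lower bound, I would pair the terms from the left as $T = (a_0 - a_1) + (a_2 - a_3) + \cdots$; every parenthesized difference is nonnegative by monotonicity, and if the number of terms is odd a single nonnegative term $a_{M-m}$ is left over, so $T \ge 0$. For the upper bound, I would instead peel off $a_0$ and group the remainder as $T = a_0 - (a_1 - a_2) - (a_3 - a_4) - \cdots$; again each parenthesized difference is nonnegative, and any leftover term is subtracted rather than added, so $T \le a_0$. Combining the two bounds gives $|S| = |T| \le f(m) \le f(X)$.

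The only point requiring genuine care---rather than a genuine obstacle---is the bookkeeping at the endpoints: keeping track of whether $M - m$ is even or odd, so that the leftover term in each grouping is handled correctly, and confirming that the passage from the real endpoints $X, Y$ to the integer endpoints $m, M$ does not cost more than the factor $f(X)$. Neither is serious, since monotonicity of $f$ together with nonnegativity of the terms makes every leftover term point in the favorable direction.
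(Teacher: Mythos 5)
Your proposal is correct and is precisely the standard Leibniz-type argument that the paper invokes by reference to Bonar and Khoury; the reduction to integer endpoints via $f(m) \le f(X)$ and the two pairings giving $0 \le T \le a_0$ are exactly the content of that standard proof. No issues.
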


\begin{proof}
  This follows from the standard proof of the alternating series test. See Bonar and Khoury \cite[Theorem 1.75]{BoKh06}. 
\end{proof}

\begin{lemma}\label{l1}
  Suppose that $f \in \mathfrak F$, that $h, q$ are integers, with $q \ge 1$, and that $1 \le X < Y$. Then
  \[
    \sum_{X < h + kq \le Y} f(h + kq) = \frac 1q \int_X^Y f(x) \, dx + O(f(X)).
  \]
  Here, the summation is over all integers $k$ such that $X < h + kq \le Y$. 
\end{lemma}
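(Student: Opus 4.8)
The plan is to treat the left-hand side as a Riemann-type sum for the integral, exploiting that $f$ is monotone so that each term $f(h+kq)$ is pinched between the averages of $f$ over the length-$q$ intervals immediately to its right and left. Writing the progression points lying in $(X,Y]$ as $t_1 < t_2 < \cdots < t_m$, so that $t_{i+1} = t_i + q$ with $X < t_1 \le X + q$ and $Y - q < t_m \le Y$, I would first compare each term to a local integral. Since $f$ is decreasing, $f(t_i) \ge f(x) \ge f(t_i + q)$ for $x \in [t_i, t_i+q]$, whence
\[
  0 \le f(t_i) - \frac{1}{q}\int_{t_i}^{t_i+q} f(x)\,dx \le f(t_i) - f(t_i + q).
\]

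Next I would sum this over $1 \le i \le m$. The integrals on the left assemble into $\frac{1}{q}\int_{t_1}^{t_m + q} f(x)\,dx$ because the intervals $[t_i, t_i + q]$ tile $[t_1, t_m + q]$ without overlap, while the upper bound telescopes to $f(t_1) - f(t_m + q) \le f(t_1) \le f(X)$, the last inequality using $t_1 > X$ and monotonicity. This yields
\[
  \left| \sum_{i=1}^m f(t_i) - \frac{1}{q}\int_{t_1}^{t_m+q} f(x)\,dx \right| \le f(X).
\]

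Then I would replace the integral over $[t_1, t_m + q]$ by the desired integral over $[X, Y]$. Since $X < t_1 \le X + q$ and $Y < t_m + q \le Y + q$, the two integrals differ only by the contributions over $[X, t_1]$ and $[Y, t_m + q]$, each an interval of length at most $q$; bounding $f$ there by $f(X)$ and $f(Y) \le f(X)$ respectively gives
\[
  \frac{1}{q}\left| \int_{t_1}^{t_m+q} f(x)\,dx - \int_X^Y f(x)\,dx \right| \le 2f(X),
\]
and combining the two displays proves the claim with an absolute implied constant.

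Finally I would dispose of the degenerate case $m = 0$, in which $(X,Y]$ contains no progression point; then $Y - X < q$, the sum is empty, and $\frac{1}{q}\int_X^Y f(x)\,dx \le \frac{Y-X}{q}\,f(X) < f(X)$, so the estimate holds trivially. The only genuine subtlety is keeping the boundary bookkeeping honest---accounting for the partial end intervals at $X$ and $Y$ and invoking $f(Y) \le f(X)$ rather than $f$ at some interior point---but the telescoping of $\sum_i \big(f(t_i) - f(t_i+q)\big)$ is what makes the main error collapse cleanly to $O(f(X))$ with a constant independent of $h$, $q$, $X$, $Y$, and $f$.
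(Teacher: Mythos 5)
Your proof is correct and follows essentially the same route as the paper: a Riemann-sum comparison of $f(h+kq)$ with $\frac1q\int f$ over length-$q$ blocks, using monotonicity and telescoping, followed by boundary adjustments of size $O(f(X))$. The only difference is cosmetic---the paper normalizes to $h=0$ and is terser about the endpoints, while you keep $h$ general and also handle the empty-sum case explicitly, which is a welcome bit of care.
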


\begin{proof}
  Without loss of generality, we may assume that $h=0$. Comparing areas below and above the graph $y = q^{-1}f(x)$, we have
  \[
    \frac 1q \int_{kq}^{(k+1)q} f(x) \, dx < f(kq) < \frac 1q\int_{(k-1)q}^{kq} f(x) \, dx,
  \]
  for any integer $k \ge 2$. Hence,
  \[
    \frac 1q \int_{X_q}^Y f(x) \, dx < \sum_{X < kq \le Y} f(kq) < f(X_q) + \frac 1q\int_{X_q}^Y f(x) \, dx,
  \]
  where $X_q = q\lfloor X/q\rfloor + q$. The lemma follows easily on noting that $X < X_q \le X+q$.
\end{proof}

\begin{lemma}\label{l2}
  Suppose that $q$ and $r$ are integers, with $3 \le q \le r$. Then
  \[
    \sum_{\substack{ q \le k \le r\\ k \equiv q \!\!\!\! \pmod {2q}}} \frac 1{k^2 - 1} < \frac 2{q^2}.
  \]
\end{lemma}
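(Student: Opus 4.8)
The plan is to first rewrite the congruence condition explicitly. Since $k \equiv q \pmod{2q}$ and $k \ge q$, the admissible values of $k$ are exactly the odd multiples of $q$, namely $k = (2m+1)q$ with $m = 0, 1, 2, \dots$. Because every term $1/(k^2-1)$ is positive, it suffices to bound the full infinite sum $\sum_{m \ge 0} \big( (2m+1)^2 q^2 - 1 \big)^{-1}$ from above; the finite sum in the statement is no larger.

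I would handle the term $m = 0$ (that is, $k = q$) separately from the rest. For $m = 0$ one has $1/(q^2 - 1) \le 9/(8q^2)$, which follows from $q \ge 3$, since then $1 - q^{-2} \ge 8/9$. For the remaining terms I would exploit the elementary inequality $k^2 - 1 > k^2 - q^2$, valid because $q \ge 3$, to replace $1/(k^2-1)$ by the more tractable $1/(k^2 - q^2)$. With $k = (2m+1)q$ this gives $k - q = 2mq$ and $k + q = 2(m+1)q$, whence
\[
  \frac{1}{k^2 - 1} < \frac{1}{(k-q)(k+q)} = \frac{1}{4q^2 m(m+1)}
\]
for every $m \ge 1$. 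This is precisely where the separate treatment of $m = 0$ is forced: the factor $k - q$ vanishes there, so the partial-fraction bound degenerates at the first term.

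The final step is a telescoping sum. Using $1/(m(m+1)) = 1/m - 1/(m+1)$ one obtains
\[
  \sum_{m \ge 1} \frac{1}{4q^2 m(m+1)} = \frac{1}{4q^2}.
\]
Adding the contribution of $m = 0$ yields the bound $9/(8q^2) + 1/(4q^2) = 11/(8q^2) < 2/q^2$, as required. I anticipate no serious obstacle: the only points requiring care are isolating the first term (where the telescoping estimate breaks down) and checking that the resulting constant $11/8$ indeed stays below $2$.
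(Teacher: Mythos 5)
Your proof is correct. It follows the same basic template as the paper's---parametrize the admissible $k$ as odd multiples of $q$, extend to the infinite sum, and split off the first term, which must be handled separately in both arguments---but the tail is bounded differently. The paper compares the sum over $m \ge 1$ with the integral $\int_1^{\infty} \bigl(q^2(2x-1)^2 - 1\bigr)^{-1}\,dx$, evaluates it as $\frac{1}{4q}\ln\bigl(\frac{q+1}{q-1}\bigr)$, and then needs the further inequality $\ln\bigl(\frac{q+1}{q-1}\bigr) < \frac{2}{q-1}$. You instead loosen $k^2 - 1$ to $k^2 - q^2 = (k-q)(k+q) = 4q^2m(m+1)$ and telescope, which avoids the integral and the logarithm entirely and is the more elementary route; it even yields the slightly sharper constant $\frac{11}{8}$ in place of the paper's bound. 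The one sentence worth tightening is your justification of $k^2 - 1 > k^2 - q^2$: this holds because $q^2 > 1$, i.e.\ already for $q \ge 2$; the hypothesis $q \ge 3$ is what you actually need for the first-term estimate $\frac{1}{q^2-1} \le \frac{9}{8q^2}$.
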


\begin{proof}
  On writing $k = q(2l - 1)$, $l \in \mathbb Z^+$, we can estimate the given sum by
  \begin{align*}
    \sum_{l = 1}^{\infty} \frac 1{q^2(2l-1)^2 - 1} 
    &< \frac 1{q^2 - 1} + \int_1^{\infty} \frac {dx}{q^2(2x-1)^2 - 1} \\
    &= \frac 1{q^2 - 1} + \frac 1{4q} \ln \left( \frac {q+1}{q-1} \right) \\
    &< \frac 1{q^2 - 1} + \frac 1{4q} \, \frac 2{q - 1} < \frac 2{q^2}. 
  \end{align*}
\end{proof}

\begin{lemma}[Partial summation]\label{l3}
  Suppose that $N$ is a positive integer and $\{a_n\}_{n=1}^{\infty}$, $\{b_n\}_{n=1}^{\infty}$ are two sequences of complex numbers. Then 
  \[
    \sum_{n = 1}^{N} a_nb_n = b_{N}\sum_{n = 1}^{N} a_n - \sum_{m = 1}^{N-1} (b_{m+1} - b_{m}) \sum_{n = 1}^{m} a_n.
  \]
\end{lemma}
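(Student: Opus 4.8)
The plan is to establish this as the classical Abel summation identity, obtained by expressing each $a_n$ as a difference of consecutive partial sums and then telescoping. First I would introduce the partial sums $A_m = \sum_{n=1}^{m} a_n$ for $0 \le m \le N$, adopting the convention $A_0 = 0$, so that $a_n = A_n - A_{n-1}$ for every $n \ge 1$. Substituting this into the left-hand side gives
\[
  \sum_{n=1}^N a_n b_n = \sum_{n=1}^N (A_n - A_{n-1}) b_n = \sum_{n=1}^N A_n b_n - \sum_{n=1}^N A_{n-1} b_n.
\]

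Next I would reindex the second sum by setting $m = n-1$ and using $A_0 = 0$ to discard the vanishing $m = 0$ term, so that $\sum_{n=1}^N A_{n-1} b_n = \sum_{m=1}^{N-1} A_m b_{m+1}$. Writing the first sum as $A_N b_N + \sum_{m=1}^{N-1} A_m b_m$ and combining the two, the terms with $1 \le m \le N-1$ collect into $-\sum_{m=1}^{N-1} A_m (b_{m+1} - b_m)$, while the isolated term is $A_N b_N = b_N \sum_{n=1}^N a_n$. Recalling that $A_m = \sum_{n=1}^m a_n$ then yields the stated formula exactly.

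The only care needed is the bookkeeping of the index shift and the boundary contributions: one must verify that the $m = 0$ term drops out (which is precisely what the convention $A_0 = 0$ secures) and that the $m = N$ term becomes the leading factor $b_N \sum_{n \le N} a_n$ sitting outside the second sum. Since no analytic input is required, there is no genuine obstacle here. As an alternative, the identity can be proved by a short induction on $N$, where the inductive step amounts to adding $a_N b_N$ and rearranging; but the telescoping argument above is cleaner and more transparent, so I would present that one.
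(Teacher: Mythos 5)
Your telescoping argument is correct: with $A_0=0$ the index shift and boundary terms work out exactly as you describe, yielding the stated identity. The paper itself gives no argument, simply citing Bonar and Khoury, and your proof is the standard Abel summation derivation that such a reference contains, so there is nothing further to compare.
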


\begin{proof}
  This is a special case of Bonar and Khoury \cite[Theorem 2.20]{BoKh06}. 
\end{proof}

\begin{lemma}\label{l4}
  For $x \in \mathbb R$,
  \[
    |\sin(\pi x)| = \frac 2{\pi} - \frac 4{\pi} \sum_{k = 1}^{\infty} \frac {\cos(2\pi kx)}{4k^2 - 1} 
    = \frac {-2}{\pi} \sum_{k = -\infty}^{\infty} \frac {e(kx)}{4k^2 - 1}.
  \]
\end{lemma}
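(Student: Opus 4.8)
The plan is to recognize the two right-hand sides as the Fourier series of the continuous, even, $1$-periodic function $g(x) = |\sin(\pi x)|$, and to compute its Fourier coefficients directly. Since $g$ has period one and $\sin(\pi x) \ge 0$ on $[0,1]$, I would write the $k$-th complex Fourier coefficient as
\[
  c_k = \int_0^1 |\sin(\pi x)|\, e(-kx)\, dx = \int_0^1 \sin(\pi x)\, e^{-2\pi i k x}\, dx,
\]
and then expand $\sin(\pi x) = (e^{i\pi x} - e^{-i\pi x})/(2i)$ to reduce the integral to two elementary exponential integrals over $[0,1]$.

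Evaluating these, and using the fact that $e^{i\pi(1-2k)} = e^{-i\pi(1+2k)} = -1$ for every integer $k$, I expect to arrive at
\[
  c_k = \frac 1{\pi}\left( \frac 1{1-2k} + \frac 1{1+2k} \right) = \frac{2}{\pi(1 - 4k^2)} = \frac{-2}{\pi(4k^2 - 1)},
\]
which is precisely the coefficient in the bilateral sum. Combining the terms for $k$ and $-k$, and isolating the $k = 0$ term (which contributes $2/\pi$), then converts the two-sided series into the cosine series of the first formula; thus the two right-hand sides are term-for-term identical once the coefficients are known, and it suffices to treat either one.

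What remains is to justify that this Fourier series actually converges to $g$ at every real $x$. The essential observation is that $c_k = O(k^{-2})$, so the series $\sum_k c_k\, e(kx)$ converges absolutely and uniformly on $\mathbb R$ to some continuous, $1$-periodic function. Since that limit has the same Fourier coefficients as the continuous function $g$, the two must agree; alternatively, one may invoke the Dirichlet--Jordan test, as $g$ is continuous and of bounded variation on each period. This convergence step is the only part demanding any care---the coefficient computation being entirely routine---and even here the rapid $k^{-2}$ decay makes the argument essentially immediate, so I anticipate no genuine obstacle.
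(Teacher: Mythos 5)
Your proposal is correct and follows essentially the same route as the paper: both identify the right-hand sides as the Fourier series of the even, continuous, $1$-periodic function $|\sin(\pi x)|$ and compute the coefficients by an elementary integral (you use the complex exponential form, the paper the cosine form, which is a cosmetic difference). Your explicit justification of pointwise convergence via the $O(k^{-2})$ decay and uniqueness of Fourier coefficients is a detail the paper leaves implicit, but it is correct and welcome.
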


\begin{proof}
  The function $|\sin(\pi x)|$ is an even, continuous, $1$-periodic function, so it can be represented by a Fourier cosine-series of the form
  \[
    |\sin(\pi x)| = \frac {a_0}2 + \sum_{k = 1}^{\infty} a_k\cos(2\pi kx),
  \]
  where
  \[
    a_k = 2\int_0^1 \sin(\pi x)\cos(2k\pi x) \, dx.
  \]
  To complete the proof, one simply needs to evaluate the above integral. 
\end{proof}

\begin{lemma}\label{l5}
  Suppose that $N$ is a positive integer and $\alpha \in \mathbb R \setminus \mathbb Z$. Then 
  \begin{equation}\label{1.1}
    \sum_{n = 0}^{N-1} e(\alpha n) = \frac {e(\alpha N) - 1}{e(\alpha) - 1}.
  \end{equation}
  Furthermore, 
  \begin{equation}\label{1.2}
    \left| \sum_{n = 0}^{N-1} e(\alpha n) \right| \le \min \left( N, \frac 1{2\|\alpha\|} \right).
  \end{equation}
\end{lemma}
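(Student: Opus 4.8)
The plan is to treat the two displayed formulas separately, deriving \eqref{1.2} from \eqref{1.1}. For \eqref{1.1}, I would simply invoke the finite geometric series identity $\sum_{n=0}^{N-1} z^n = (z^N - 1)/(z - 1)$, which holds whenever $z \neq 1$, taking $z = e(\alpha)$; since $\alpha \notin \mathbb Z$ we have $e(\alpha) \neq 1$, so the denominator does not vanish and \eqref{1.1} follows at once.

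For \eqref{1.2}, the bound by $N$ is immediate: each summand has modulus $1$, so the triangle inequality gives $\bigl| \sum_{n=0}^{N-1} e(\alpha n) \bigr| \le N$. The substantive bound is the one by $(2\|\alpha\|)^{-1}$. Here I would start from \eqref{1.1} and estimate
\[
  \left| \sum_{n=0}^{N-1} e(\alpha n) \right| = \frac{|e(\alpha N) - 1|}{|e(\alpha) - 1|} \le \frac{2}{|e(\alpha) - 1|},
\]
using $|e(\alpha N) - 1| \le |e(\alpha N)| + 1 = 2$ in the numerator. It then remains only to bound the denominator from below.

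The key computation is the identity $|e(\theta) - 1| = 2|\sin(\pi\theta)|$, which follows by factoring $e^{2\pi i \theta} - 1 = e^{\pi i \theta}\bigl(e^{\pi i \theta} - e^{-\pi i \theta}\bigr)$ and taking moduli. Combined with the observation that $|\sin(\pi\alpha)| = \sin(\pi\|\alpha\|)$---valid because $|\sin(\pi \,\cdot\,)|$ is even and $1$-periodic while $\|\alpha\| \in [0, \tfrac12]$---this reduces the problem to the elementary inequality $\sin(\pi t) \ge 2t$ for $t \in [0, \tfrac12]$. I expect this last step to be the only genuine point of substance: it is an expression of the concavity of $\sin$ on $[0, \pi]$, which forces its graph to lie above the chord joining $(0, 0)$ to $(\tfrac12, 1)$. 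Granting it, we obtain $|e(\alpha) - 1| = 2\sin(\pi\|\alpha\|) \ge 4\|\alpha\|$, whereupon the estimate above gives $\bigl| \sum_{n=0}^{N-1} e(\alpha n) \bigr| \le (2\|\alpha\|)^{-1}$. Taking the smaller of the two bounds completes the proof of \eqref{1.2}.
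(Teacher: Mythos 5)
Your proposal is correct and follows essentially the same route as the paper: geometric series for \eqref{1.1}, the trivial triangle-inequality bound for the first part of \eqref{1.2}, and for the second part a lower bound on $|e(\alpha)-1|$ via the concavity of $\sin$ (Jordan's inequality $\sin(\pi t)\ge 2t$ on $[0,\tfrac12]$). The only cosmetic difference is that the paper also converts the numerator to $|\sin(\pi N\alpha)|\le 1$ rather than bounding $|e(\alpha N)-1|\le 2$ directly; the constants work out identically.
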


\begin{proof}
  Identity \eqref{1.1} follows on noting that the sum on the left is a finite geometric series. Estimating the right side of~\eqref{1.1}, we get
  \[
    \left| \sum_{n = 0}^{N-1} e(\alpha n) \right| = \left| \frac {e(\alpha N) - 1}{e(\alpha) - 1} \right| = \left| \frac {\sin(\pi N\alpha)}{\sin(\pi\alpha)} \right| \le \frac 1{2\|\alpha\|},
  \]
  where the last inequality uses the concavity of $|\sin(\pi \alpha)|$ in the range $0 < \alpha < 1$. The other part of inequality \eqref{1.2} is the trivial bound that follows from the triangle inequality.
\end{proof}

\begin{lemma}\label{l6}
  Suppose that $1 \le \nu < \mu \le \infty$ and $p > 0$. Then 
  \begin{equation}\label{1.3}
    \left| \int_{\nu}^{\mu} t^{-p} \cos t \, dt \right| \le 2\nu^{-p}.
  \end{equation}
\end{lemma}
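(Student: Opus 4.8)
The plan is to exploit the oscillation of $\cos t$ against the monotonicity of the weight $t^{-p}$, and the cleanest tool for this is the second mean value theorem for integrals. For $p > 0$ the function $g(t) = t^{-p}$ is positive and decreasing on $[1, \infty)$, while $\cos$ is continuous, so I would first dispose of the case of finite $\mu$. By Bonnet's form of the second mean value theorem, there exists $\xi \in [\nu, \mu]$ such that
\[
  \int_{\nu}^{\mu} t^{-p} \cos t \, dt = \nu^{-p} \int_{\nu}^{\xi} \cos t \, dt = \nu^{-p}\big( \sin\xi - \sin\nu \big).
\]
Since $|\sin\xi - \sin\nu| \le 2$, the bound \eqref{1.3} follows immediately in this case.

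For the remaining case $\mu = \infty$, I would pass to a limit. The improper integral $\int_{\nu}^{\infty} t^{-p}\cos t \, dt$ converges---absolutely when $p > 1$, and conditionally when $0 < p \le 1$ (e.g.\ by Dirichlet's test, as $\sin t$ has bounded antiderivatives and $t^{-p} \downarrow 0$). Applying the finite-$\mu$ estimate on each interval $[\nu, M]$ yields $\big| \int_{\nu}^{M} t^{-p} \cos t \, dt \big| \le 2\nu^{-p}$ uniformly in $M$, and letting $M \to \infty$ preserves the inequality.

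The only point requiring genuine care is the value of the constant. A more pedestrian route via integration by parts, writing $\int_{\nu}^{\mu} t^{-p}\cos t\,dt = \big[ t^{-p}\sin t \big]_{\nu}^{\mu} + p\int_{\nu}^{\mu} t^{-p-1}\sin t\,dt$, does establish an estimate of this shape, but the two boundary terms together with the remaining integral only deliver the weaker constant $3$ for finite $\mu$ (the boundary contribution at $\mu$ is harmless only when $\mu = \infty$). Thus the subtlety is to obtain the sharp constant $2$ \emph{uniformly} in $\mu$, and the second mean value theorem accomplishes exactly this by collapsing the weight to its value $\nu^{-p}$ at the left endpoint.
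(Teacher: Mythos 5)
Your main argument is correct and takes a genuinely different route from the paper. You invoke Bonnet's form of the second mean value theorem to collapse the decreasing weight $t^{-p}$ to its value at the left endpoint, giving $\nu^{-p}(\sin\xi - \sin\nu)$ and hence the constant $2$ at once; the paper instead integrates by parts. Both are clean, and yours arguably isolates the mechanism (monotone weight against an oscillating factor with bounded primitive) more transparently. However, your closing paragraph misjudges the integration-by-parts route: it does \emph{not} lose a factor and deliver only the constant $3$. After writing
\[
  \int_{\nu}^{\mu} t^{-p}\cos t\,dt = \bigl[\, t^{-p}\sin t \,\bigr]_{\nu}^{\mu} + p\int_{\nu}^{\mu} t^{-p-1}\sin t\,dt,
\]
one bounds the right side by $\nu^{-p} + \mu^{-p} + p\int_{\nu}^{\mu} t^{-p-1}\,dt = \nu^{-p} + \mu^{-p} + \bigl(\nu^{-p} - \mu^{-p}\bigr) = 2\nu^{-p}$ exactly: the boundary contribution $\mu^{-p}$ is cancelled by the $-\mu^{-p}$ coming from evaluating the integral of $t^{-p-1}$, for every finite $\mu$. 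This is precisely the paper's proof, so the ``subtlety'' you describe is not there; the two approaches yield the same constant with the same uniformity in $\mu$.
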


\begin{proof}
  Suppose first that $\mu < \infty$. Partial integration gives
  \[
    \int_{\nu}^{\mu} t^{-p} \cos t \, dt 
    = t^{-p}\sin t \big|_{\nu}^{\mu} + p \int_{\nu}^{\mu} \frac {\sin t}{t^{p+1}} \, dt.
  \]
  Hence,
  \begin{align*}
    \left| \int_{\nu}^{\mu} t^{-p} \cos t \, dt \right| &\le \nu^{-p} + \mu^{-p} + p \int_{\nu}^{\mu} \frac {dt}{t^{p+1}} = 2\nu^{-p}.
  \end{align*}
  The case $\mu = \infty$ of \eqref{1.3} follows by letting $\mu \to \infty$.
\end{proof}

\begin{lemma}\label{l7}
  Suppose that $0 < p < 1$ and $0 < \nu < 1 < \mu$. Then 
  \begin{equation}\label{1.5}  
    \int_{\nu}^{\mu} t^{-p} \cos t \, dt 
    = A_p + O\big( \mu^{-p} \big) + O_p\big( \nu^{1-p} \big),      
  \end{equation}
where 
  \begin{equation}\label{1.6} 
    A_p = \int_0^\infty t^{-p} \cos t \, dt = \Gamma(1-p)\sin(\pi p/2) > \frac {p}{1-p}.
  \end{equation}
\end{lemma}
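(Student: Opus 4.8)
The plan is to split the integral over the entire half-line and treat the two resulting tails separately, then to verify the closed form for $A_p$ and its lower bound. First I would write
\[
\int_\nu^\mu t^{-p}\cos t\,dt = A_p - \int_0^\nu t^{-p}\cos t\,dt - \int_\mu^\infty t^{-p}\cos t\,dt,
\]
where $A_p=\int_0^\infty t^{-p}\cos t\,dt$ converges because $t^{-p}$ is integrable near $0$ (as $p<1$) and the oscillation of $\cos t$ controls the tail. For the piece near the origin the trivial estimate $\bigl|\int_0^\nu t^{-p}\cos t\,dt\bigr|\le \int_0^\nu t^{-p}\,dt=\nu^{1-p}/(1-p)$ produces the term $O_p(\nu^{1-p})$, the implied constant depending on $p$ only through $1/(1-p)$. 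For the tail I would apply Lemma~\ref{l6} with lower limit $\mu>1$ to obtain $\bigl|\int_\mu^\infty t^{-p}\cos t\,dt\bigr|\le 2\mu^{-p}$, which is the term $O(\mu^{-p})$. This already yields the asymptotic formula \eqref{1.5}.

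Next I would identify $A_p$. Starting from the Laplace transform $\int_0^\infty t^{s-1}e^{-zt}\,dt=\Gamma(s)z^{-s}$, valid for $\re z>0$ and $0<\re s<1$, and letting $z\to -i$ from within the right half-plane (the boundary integral converges precisely because $0<\re s<1$), the real part gives the classical evaluation $\int_0^\infty t^{s-1}\cos t\,dt=\Gamma(s)\cos(\pi s/2)$. Taking $s=1-p$ and using $\cos(\pi(1-p)/2)=\sin(\pi p/2)$ yields $A_p=\Gamma(1-p)\sin(\pi p/2)$, as asserted in \eqref{1.6}; alternatively this can simply be quoted from a standard table of integrals.

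The only genuinely delicate point is the lower bound $A_p>p/(1-p)$, and I expect it to be the crux of the argument. A naive attempt fails: in the equivalent form $\Gamma(2-p)\sin(\pi p/2)>p$ one has $\sin(\pi p/2)>p$ (the graph of $\sin$ lies above its chord on $[0,\pi/2]$), but the factor $\Gamma(2-p)$ is \emph{less} than $1$ throughout $(0,1)$, so the two effects compete and no elementary bound is evident. The trick is to invoke the reflection formula $\Gamma(p)\Gamma(1-p)=\pi/\sin(\pi p)$ together with $\sin(\pi p)=2\sin(\pi p/2)\cos(\pi p/2)$ to rewrite
\[
A_p=\frac{\pi p}{2\Gamma(1+p)\cos(\pi p/2)}.
\]
The desired inequality then becomes the \emph{upper} bound $2\Gamma(1+p)\cos(\pi p/2)<\pi(1-p)$, in which both available estimates now pull in the same direction: since $1+p\in(1,2)$ and $\Gamma$ is strictly convex with $\Gamma(1)=\Gamma(2)=1$, the graph lies below the constant chord, so $\Gamma(1+p)\le 1$; and since $\cos(\pi p/2)=\sin(\pi(1-p)/2)$ while $\sin\theta<\theta$ for $\theta>0$, we have $\cos(\pi p/2)<\pi(1-p)/2$. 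Combining these gives $2\Gamma(1+p)\cos(\pi p/2)\le 2\cos(\pi p/2)<\pi(1-p)$, which is exactly what is required. Without this reflection-formula manoeuvre, controlling the product $\Gamma(2-p)\sin(\pi p/2)$ from below appears to demand a nonelementary convexity analysis of the function $\Gamma(2-p)\sin(\pi p/2)-p$ on $[0,1]$, whereas after it the bound is immediate.
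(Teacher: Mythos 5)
Your proof of the asymptotic formula \eqref{1.5} is exactly the paper's: write the integral as $A_p$ minus the two tails, bound $\int_0^\nu t^{-p}\cos t\,dt$ trivially by $\nu^{1-p}/(1-p)$, and bound $\int_\mu^\infty t^{-p}\cos t\,dt$ by $2\mu^{-p}$ via Lemma~\ref{l6}; the closed form for $A_p$ is likewise essentially quoted (the paper cites P\'olya--Szeg\"o rather than sketching the Laplace-transform derivation, but that is immaterial). Where you genuinely depart from the paper is the lower bound $A_p > p/(1-p)$, and here your instinct that the ``naive'' route fails is exactly right --- because the naive route is the one the paper takes. The paper's proof asserts $\Gamma(1-p) = \Gamma(2-p)/(1-p) \ge \Gamma(1)/(1-p)$, i.e.\ $\Gamma(2-p) \ge 1$; but $\Gamma < 1$ throughout the open interval $(1,2)$ (e.g.\ $\Gamma(3/2) = \sqrt{\pi}/2 \approx 0.886$), so that step is false, and combined with $\sin(\pi p/2) \ge p$ it does not deliver the stated inequality (the inequality itself is true; only the paper's derivation breaks down). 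Your repair via the reflection formula --- rewriting $A_p = \pi p/\big(2\Gamma(1+p)\cos(\pi p/2)\big)$ and then using $\Gamma(1+p) \le 1$ (convexity between $\Gamma(1)=\Gamma(2)=1$) together with $\cos(\pi p/2) = \sin(\pi(1-p)/2) < \pi(1-p)/2$ --- is correct and yields the strict bound cleanly. It is worth noting that in the application (the proof of Theorem~\ref{th6}) only the positivity of $A_p$ is actually used, which is immediate from $\Gamma(1-p)\sin(\pi p/2) > 0$; but the quantitative claim in \eqref{1.6} does require an argument like yours.
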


\begin{proof}
  Inequality \eqref{1.5} follows from Lemma~\ref{l6} and the bound
  \[
    \bigg| \int_{0}^{\nu} t^{-p} \cos t \, dt \bigg| 
    \le \int_0^\nu t^{-p} \, dt = \frac {\nu^{1-p}}{1-p}.
  \]
  The closed-form expression for $A_p$ is a standard Fourier cosine-transform formula. It can be found in many references on Fourier analysis, though its proof is often omitted. The interested reader will find the most natural proof (which uses the theory of contour integration) in the solution of Problem III.151 in P\'olya and Szeg\"o \cite[p.~331]{PoSz72}. Finally, to derive the lower bound for $A_p$, we use the inequalities
  \[
    \sin(\pi p/2) \ge p, \qquad 
    \Gamma(1-p) = \frac {\Gamma(2-p)}{1-p} \ge \frac {\Gamma(1)}{1-p}.
  \]
\end{proof}

\section{Proof of Theorem \ref{th1}}
\label{s2}

We derive the theorem from Cauchy's criterion. Consider the sum
\begin{equation}\label{2.1}
  S(\alpha; M, N) = \sum_{n=N+1}^{N+M} (-1)^nf(n)|\sin(n\pi\alpha)|,
\end{equation}
where $M, N$ are positive integers. We note that $|\sin(\pi an/q)| = |\sin(\pi ah/q)|$ whenever $n \equiv h \pmod q$. Thus, splitting $S(a/q; M, N)$ according to the residue class of $n$ modulo $q$, we have
\begin{align}\label{2.2}
  S(a/q; M, N) &= \sum_{h = 1}^q \sum_{\substack{ n = N + 1\\ n \equiv h \; (\mathrm{mod} \; q)}}^{N+M} (-1)^nf(n)|\sin(\pi an/q)| \notag\\
  &= \sum_{h = 1}^q |\sin(\pi ah/q)| \sum_{\substack{ n = N + 1\\ n \equiv h \; (\mathrm{mod} \; q)}}^{N+M}  (-1)^{n}f(n).
\end{align}
In \eqref{2.2}, we can express $n$ as
\[
  n = N + h + kq, \qquad 0 \le k \le K = \lfloor (M - h)/q \rfloor.
\]
Hence, we can rewrite \eqref{2.2} as
\begin{align}\label{2.3}
  S(a/q; M, N) &= \sum_{h = 1}^q (-1)^{N_h}|\sin(\pi ah/q)| \sum_{k = 0}^{K}(-1)^{kq}f(N_h + kq),
\end{align}
where $N_h = N + h$. We now consider separately the cases of even and odd $q$.\\

\paragraph*{\em Case 1: $q$ odd.} Then $(-1)^{kq} = (-1)^k$, and we have
\begin{align}\label{2.4}
  \sum_{k = 0}^{K} (-1)^{kq}f(N_h + kq) &= \sum_{k = 0}^{K} (-1)^kf(N_h + kq).
\end{align}
By Lemma \ref{l0}, the sum on the right side of \eqref{2.4} is bounded by $f(N)$. Thus, it follows from \eqref{2.3} that
\[
  |S(a/q; M, N)| \le qf(N),
\]
Since $\lim\limits_{x \to \infty} f(x) = 0$, this establishes the convergence case of Theorem~\ref{th1}.\\

\paragraph*{\em Case 2: $q$ even.} Then $(-1)^{kq} = 1$, and we have
\begin{align}\label{2.5}
  \sum_{k = 0}^{K} (-1)^{kq}f(N_h + kq)&= \sum_{k = 0}^{K} f(N_h + kq).
\end{align}
We apply Lemma \ref{l1} to the sum on the right side of \eqref{2.5} and substitute the result into \eqref{2.3} to obtain
\begin{align}\label{2.6}
  S(a/q; M, N) = \frac {I_f}q \sum_{h = 1}^q (-1)^{N_h}|\sin(\pi ah/q)| + O\left(qf(N)\right),
\end{align}
where 
\[
  I_f = I_f(M,N) = \int_N^{N+M} f(x) \, {dx}.
\]
Since $q$ is even and $\gcd(a,q) = 1$, $a$ must be odd. Thus, $(-1)^h = (-1)^{ah}$, and we have
\begin{align}\label{2.7}
  \sum_{h = 1}^q (-1)^h|\sin(\pi ah/q)| = \sum_{h = 1}^q (-1)^{ah}|\sin(\pi ah/q)|.
\end{align}
Note that when $x \in \mathbb Z$, the expression $(-1)^x|\sin(\pi x/q)|$ depends only on the residue class of $x$ modulo $q$. Also, since $\gcd(a,q) = 1$, the numbers $a, 2a, \dots, qa$ form a complete residue system modulo $q$ (see Hardy and Wright \cite[Theorem~56]{HaWr79}). Therefore, the sum on the right side of \eqref{2.7} is a rearrangement of the sum 
\begin{align}\label{2.8}
  \sum_{j = 1}^q (-1)^j|\sin(\pi j/q)| &= \sum_{j = 1}^{q-1} \sin(\pi j(1 + 1/q)).
\end{align}
An appeal to the well-known formula 
\[
  \sum_{j = 1}^n \sin(j\theta) = \frac {\sin(\frac 12n\theta)\sin(\frac 12(n+1)\theta)}{\sin(\frac 12\theta)}
\]
now yields
\begin{align}\label{2.9}
  \sum_{j = 1}^{q-1} \sin(\pi j(1 + 1/q)) &= \frac {\sin((q+1)\frac {\pi}2)\sin(\frac {q\pi}2 - \frac {\pi}{2q})}{\sin(\frac{\pi}2 + \frac{\pi}{2q})} \notag\\
  &= \frac {(-1)^{q+1}\sin({\pi}/{2q})}{-\cos({\pi}/{2q})} = \tan\left( {\pi}/{2q} \right).
\end{align}
Combining \eqref{2.6}--\eqref{2.9}, we conclude that when $q$ is even and $\gcd(a,q) = 1$, 
\begin{equation}\label{2.10}
  S(a/q; M, N) = \frac {(-1)^NI_f\tan(\pi/2q)}q + O\left(qf(N)\right).
\end{equation} 

To establish the divergence case of Theorem \ref{th1}, we need to show that there are choices of $M$ and $N$, with $N \to \infty$, that keep the right side of \eqref{2.10} bounded away from zero. When $N$ is even, \eqref{2.10} yields
\[
  S(a/q; M, N) \ge \frac {\pi}{2q^2} \int_N^{N+M} f(x) \, {dx} + O \left( qf(N) \right). 
\]
Since $\int_N^{\infty} f(x) \, dx$ diverges, this completes the proof of the theorem.

\section{Proof of Theorem \ref{th2}}
\label{s4}

To prove Theorem \ref{th2}, we again estimate the sum $S(\alpha; M, N)$ defined by \eqref{2.1}. It is convenient to assume that $N$ is even---as we may, since
\[
  S(\alpha; M, N) = S(\alpha; M, N+1) + O\left( f(N) \right).
\]

We start by expanding the function $|\sin(n\pi\alpha)|$ in a Fourier series. By Lemma~\ref{l4},
\[
  S(\alpha;M,N) = \frac {2}{\pi} \sum_{k = -\infty}^{\infty} \frac {-1}{4k^2-1} \sum_{n=N+1}^{N+M} (-1)^nf(n)e(\alpha kn).
\]
Using Lemma \ref{l0} to estimate the contribution from $k = 0$ and combining the terms with $k = \pm m$, $m \ge 1$, we obtain
\begin{align}\label{4.1}
  S(\alpha;M,N) &= \frac {4}{\pi} \re\bigg\{ \sum_{k=1}^{\infty} \frac {-1}{4k^2 - 1} \sum_{n = N+1}^{N+M} (-1)^nf(n)e(\alpha kn) \bigg\} + O\left( f(N) \right).
\end{align}
We now estimate the contribution to the right side of \eqref{4.1} from terms with $k > M$. By the triangle inequality and the monotonicity of $f$,
\[
  \left| \sum_{n = N+1}^{N+M} (-1)^nf(n)e(\alpha kn) \right| \le Mf(N),
\]
whence 
\[
  \sum_{k > M} \frac 1{4k^2-1} \left| \sum_{n = N+1}^{N+M} (-1)^nf(n)e(\alpha kn) \right| 
  \le \sum_{k > M} \frac {Mf(N)}{4k^2-1} = \frac {Mf(N)}{4M+2}.
\]
Thus, we deduce from  \eqref{4.1} that
\begin{align}\label{4.2}
  S(\alpha;M,N) &= \frac {4}{\pi} \re\bigg\{ \sum_{k=1}^M \frac {-e(\alpha kN)}{4k^2 - 1} \sum_{n = 1}^{M} (-1)^ng(n)e(\alpha kn) \bigg\} + O\left( f(N) \right),
\end{align}
where $g(x) = f(N+x)$. By Lemma \ref{l3}, we have
\begin{align}\label{4.3}
  \sum_{n = 1}^{M} (-1)^ng(n)e(\beta n)  
  &= g(M) U(\beta; M) - \sum_{m = 1}^{M-1} \Delta g(m)U(\beta; m),
\end{align}
where $\Delta g(m) = g(m+1) - g(m)$ and
\[
  U(\beta; m) = \sum_{n = 1}^m (-1)^ne(\beta n) = \sum_{n = 1}^m e((\beta + 1/2)n).
\]
Substituting \eqref{4.3} into the right side of \eqref{4.2}, we obtain
\begin{equation}\label{4.4}
  S(\alpha; M, N) = \frac {4}{\pi} \re\bigg\{ g(M)V(\alpha;M) - \sum_{m = 1}^{M-1} \Delta g(m) V(\alpha;m) \bigg\} + O(f(N)),
\end{equation} 
where
\begin{align*}
  V(\alpha; m) = \sum_{k = 1}^{M} \frac {-e(\alpha kN)}{4k^2 - 1} \, U(k\alpha; m).
\end{align*}

In order to estimate the right side of \eqref{4.4}, we break the sum $V(\alpha; m)$ into blocks depending on the denominators of the rational approximations to $\alpha$. Let $\{ a_n/q_n \}_{n=1}^\infty$ be the sequence of best rational approximations to $\alpha$. We want to extract a subsequence $\{ r_j \}_{j=1}^{\infty}$ of $\{ q_n \}_{n=1}^\infty$ that satisfies $r_{j+1} \ge 2r_j$ for all $j \ge 1$. For every $n \ge 1$, there is a unique integer $k = k(n) \ge 0$ such that
\begin{equation}\label{4.5}
  1 \le {q_{n+k}}/{q_n} < 2 \le {q_{n+k+1}}/q_n.
\end{equation}
We construct a recursive sequence $\{ n_j \}_{j = 1}^\infty$ by setting 
\[
  n_1 = 1, \qquad n_{j+1} = n_j + k_j + 1 \quad (j \ge 1), 
\]
where $k_j = k(n_j)$ is chosen according to \eqref{4.5} with $n = n_j$. If we set $r_j = q_{n_j}$, the sequence $\{ r_j \}_{j=1}^\infty$ has the desired property. We decompose $V(\alpha; m)$ into blocks $V_j(\alpha; m)$ defined by
\begin{equation}\label{4.6}
  V_j(\alpha; m) = \sum_{k \in \mathcal K_j(M)} \frac {-e(\alpha kN)}{4k^2 - 1} \, U(k\alpha; m),
\end{equation}
where $\mathcal K_j(M)$ is the set of positive integers $k$ subject to $k \le M$ and $r_{j-1} < 4k \le r_j$. Next, we obtain three different estimates for $V_j(\alpha; m)$. Let $q = q(r_j)$ denote the largest denominator of a best rational approximation to $\alpha$ with $q < r_j$. Note that, by the construction of the $r_j$'s, we have $r_{j-1} \le q < 2r_{j-1}$. Our estimates depend on the size and parity of~$q$.

\subsection{Estimation of $V_j(\alpha; m)$ for small $j$}

When $j$ is bounded above by an absolute constant, we appeal to \eqref{1.2} and get
\begin{equation}\label{4.7}
  |V_j(\alpha; x)| \le \sum_{r_{j-1} < 4k \le r_j} \frac {\|k\alpha + 1/2\|^{-1}}{8k^2 - 2} = K_j(\alpha), \quad \text{say.}
\end{equation}

\subsection{Estimation of $V_j(\alpha; m)$ for odd $q$}
\label{s4.2}

Suppose that $q$ is odd and sufficiently large. Let $a/q$ be the best rational approximation to $\alpha$ with denominator $q$. We write $r = r_j$ and $\theta = \alpha - a/q$. When $4k \le r$, by \eqref{i.3} and the choices of $q$ and $r$, we have
\[
  k|\theta| < \frac {k}{qr} \le \frac 1{4q}. 
\]
Since $q$ is odd, we have $2q \nmid (2ak+q)$ and 
\[
  \delta_{a,q}(k) = \left\| \frac {2ak+q}{2q} \right\| \ge \frac 1{2q}.
\]
Hence,
\[
  \left\| k\alpha + \frac 12 \right\| \ge \delta_{a,q}(k) - k|\theta| >  \delta_{a,q}(k) - \frac 1{4q} 
  \ge \frac {\delta_{a,q}(k)}2.
\]
Using \eqref{4.7}, we obtain
\begin{align}\label{4.8}
  \left| V_j(\alpha; m) \right| &\le \sum_{q/2 < 4k \le r} \frac {\delta_{a,q}(k)^{-1}}{4k^2-1}  
  = \sum_{\substack{ 1 \le h \le 2q\\ 2 \nmid h}} \sum_{\substack{q/2 < 4k \le r\\ 2ak + q \equiv h \!\!\!\! \pmod {2q}}} \frac {\delta_{a,q}(k)^{-1}}{4k^2-1} \notag\\
  &= \sum_{\substack{ 1 \le h \le 2q\\ 2 \nmid h}} \left\| \frac h{2q} \right\|^{-1} \sum_{\substack{q/2 < 4k \le r\\ 2ak + q \equiv h \!\!\!\! \pmod {2q}}} \frac {1}{4k^2-1}.
\end{align} 
Note that we have used the inequality $q \le 2r_{j-1}$ observed earlier. Let $b_h$, $1 \le b_h \le q$, be such that $ab_h \equiv \frac 12(h-q) \pmod q$. The sum over $k$ on the right side of \eqref{4.8} is
\begin{align*}
  \sum_{\substack{q/2 < 4k \le r\\ k \equiv b_h \!\!\!\! \pmod q}} \frac {1}{4k^2-1} 
  &\le \frac {1}{(q/4)^2 - 1} + \sum_{l = 1}^{\infty} \frac 1{4(b_h + lq)^2 - 1} \\
  &\le \frac {20}{q^2} + \sum_{l = 1}^{\infty} \frac 1{3q^2l^2} \le \frac {21}{q^2},
\end{align*}
provided that $q \ge 9$. We deduce from this inequality and \eqref{4.8} that 
\begin{align}\label{4.9}
  \left| V_j(\alpha; m) \right| \le \frac {21}{q^2} \sum_{h = 1}^q \left\| \frac {2h-1}{2q} \right\|^{-1} 
  = \frac {42}{q^2} \sum_{h = 1}^{(q-1)/2} \frac {2q}{2h-1} + \frac {42}{q^2}.
\end{align} 
Finally, combining \eqref{4.9} and the inequality
\[
  \sum_{h = 1}^{(q-1)/2} \frac {2}{2h-1} < 2 + \int_1^{(q-1)/2} \frac {2 \, dx}{2x - 1} < \ln q + 2,
\]
we conclude that 
\begin{align}\label{4.10}
  \left| V_j(\alpha; m) \right| &\le c_1q^{-1}\ln q,
\end{align}  
where $c_1 > 0$ is an absolute constant.

\subsection{Estimation of $V_j(\alpha; m)$ for even $q$}
\label{s4.3}

Suppose that $q$ is even, and let $a, r$ and $\theta$ have the same meanings as in \S\ref{s4.2}. Except when $2k \equiv q \pmod {2q}$, we can argue similarly to \S\ref{s4.2}. Indeed, let $V_j'(\alpha; m)$ be the subsum of $V_j(\alpha; m)$ where $2k \not\equiv q \pmod {2q}$. When $2k \not\equiv q \pmod {2q}$, we have
\[
  \delta_{a,q}(k) = \left\| \frac {2ak+q}{2q} \right\| \ge \frac 1{q},
\]
so we can proceed similarly to \eqref{4.8}--\eqref{4.10} to show that
\begin{align}\label{4.11}
  \left| V_j'(\alpha; m) \right| &\le \sum_{\substack {1 \le |h| \le q\\ 2 \mid h}} \left\| \frac h{2q} \right\|^{-1} \sum_{\substack{q/2 < 4k \le r\\ 2ak \equiv h \!\!\!\! \pmod {2q}}} \frac {1}{4k^2-1} \notag\\
  &\le \frac {42}{q^2}\sum_{j = 1}^{q/2} \frac qj < c_2q^{-1}\ln q,
\end{align} 
where $c_2 > 0$ is an absolute constant.

Thus, it remains to estimate the sum
\begin{align}\label{4.12}
  V_j''(\alpha; m) &= \sum_{\substack{k \in \mathcal K_j(M)\\ 2k \equiv q \!\!\!\! \pmod {2q}}} \frac {-e(\alpha kN)}{4k^2-1} \, U(k\alpha; m).
\end{align} 
Recall that $qr < |\theta|^{-1} < 2qr$. When $2k \equiv q \pmod {2q}$, we have
\[
  U(k\alpha; m) = \sum_{n = 1}^m e(\theta kn),
\]
so \eqref{1.2} gives
\[
  |U(k\alpha; m)| \le \min\left( m, (2k|\theta|)^{-1} \right) \le \min \left( m, 2r \right).
\]
Therefore, by \eqref{4.12} and Lemma \ref{l2},
\begin{align}\label{4.13}
  \left| V_j''(\alpha; m) \right| \le \sum_{\substack{ q \le 2l \le r\\ l \equiv q \!\!\!\! \pmod {2q}}} 
  \frac {2\min(m, r)}{l^2-1} \le 4q^{-2}\min(m,r).
\end{align} 

Combining this inequality and \eqref{4.11}, we conclude that 
\begin{align}\label{4.14}
  \left| V_j(\alpha; m) \right| &\le 4q^{-2}\min(m, r) + c_2q^{-1}\ln q.
\end{align}  
Moreover, we note that the first term on the right side of \eqref{4.14} is superfluous when $2q > r$, since in that case the sum $V_j''(\alpha; m)$ is empty.

\subsection{Completion of the proof}
\label{s4.4}

Let $j_0 \ge 2$ be an integer to be chosen later, and set
\[
  K = \sum_{j = 2}^{j_0} K_j(\alpha) = \sum_{4k \le r_{j_0}} \frac {\|k\alpha + 1/2\|^{-1}}{8k^2 - 2}.
\]
We use \eqref{4.7} to estimate the contribution to $V(\alpha;m)$ from subsums $V_j(\alpha; m)$ with $j \le j_0$, and we use \eqref{4.10} and \eqref{4.14} to estimate the contribution from sums $V_j(\alpha; m)$ with $j > j_0$. Let $\mathcal I_{\alpha}(M)$ denote the set of indices $j > j_0$ such that $r_{j-1} \le M$ and $q(r_j)$ is even and satisfies $2q(r_j) \le r_j$. We obtain 
\[
  |V(\alpha; m)| \le K + c_3\sum_{j \ge j_0} \frac {\ln r_j}{r_j} 
  + 4 \sum_{j \in \mathcal I_{\alpha}(M)} \frac {\min(m, r_j)}{q(r_j)^2},
\] 
where $c_3 = \max(c_1, c_2)$. By our choice of the $r_j$'s, we have $r_j \ge 2^j$, so
\begin{equation}
  \sum_{j \ge j_0} \frac {\ln r_j}{r_j} \le \sum_{j = 1}^{\infty} \frac {j\ln 2}{2^j} = 2\ln 2. \label{4.15}
\end{equation}
Hence,
\begin{equation}\label{4.16}
  |V(\alpha; m)| \le  K + c_4 + 8 \sum_{j \in \mathcal I_{\alpha}(M)} \frac {\min(m, s_j)}{q(r_j)^2},
\end{equation}
where $c_4 = 2c_3\ln 2$ and $s_j = \lceil r_j/2 \rceil$. Using \eqref{4.15} to bound the right side of \eqref{4.4}, we get
\begin{equation}\label{4.17}
  |S(\alpha; M, N)| \le \sum_{j \in \mathcal I_{\alpha}(M)} \frac {11\Sigma_j}{q(r_j)^2} + O_{\alpha,j_0}(f(N)),
\end{equation}
where
\[
  \Sigma_j = g(M)\min(M,s_j) - \sum_{m = 1}^{M-1} \Delta g(m)\min(m, s_j).
\]
Let $\chi_s$ denote the characteristic function of the interval $[0, s]$. Since $\min(m, s) = \sum_{n=1}^m \chi_s(n)$ for integer $s$, Lemma \ref{l3} yields
\begin{equation}\label{4.18}
  \Sigma_j = \sum_{n = 1}^M g(n)\chi_{s_j}(n) \le \sum_{1 \le n \le s_j} g(n) \le \int_0^{s_j} g(x) \, dx.
\end{equation}
By the monotonicity of $f$, 
\[
  \int_0^{s_j} g(x) \, dx = \int_1^{s_j+1} f(x + N - 1) \, dx \le \int_1^{r_j} f(x) \, dx ,
\]
so we deduce from \eqref{4.17} and \eqref{4.18} that 
\begin{equation}\label{4.19}
  |S(\alpha; M, N)| \le \sum_{j \in \mathcal I_{\alpha}(M)} \frac {11}{q(r_j)^2}\int_1^{r_j} f(x) \, dx + O_{\alpha,j_0}(f(N)).
\end{equation}

Finally, let us fix an $\eps > 0$. Since the series \eqref{i.4} converges, we can find an index $n_0 = n_0(\eps)$ such that
\[
  \sum_{ \substack{ n = n_0\\ q_n \in \mathcal Q_\alpha}}^{\infty} \frac 1{q_n^2} \int_1^{q_{n+1}} f(x) \, dx < \frac {\eps}{12}.
\]
We choose $j_0$ above to be the least integer $j \ge 2$ such that $r_j \ge q_{n_0}$. Then
\[
  \sum_{j \in \mathcal I_{\alpha}(M)} \frac 1{q(r_j)^2}\int_1^{r_j} f(x) \, dx \le \sum_{ \substack{ n = n_0\\ q_n \in \mathcal Q_\alpha}}^{\infty} \frac 1{q_n^2} \int_1^{q_{n+1}} f(x) \, dx < \frac {\eps}{12}, 
\]
and \eqref{4.19} yields
\[
  |S(\alpha; M, N)| < \frac {11\eps}{12} + O_{\alpha, \eps}(f(N)).
\]
Therefore, we can find an integer $N_0 = N_0(\eps, \alpha, f)$ such that when $N \ge N_0$, one has
\[
  |S(\alpha; M, N)| < \eps.
\]
This establishes the convergence of the series \eqref{i.2}.

\section{Proof of Theorem \ref{th6}}
\label{s5}

We assume that the series \eqref{i.b} diverges and consider the sum $S(\alpha; M, N)$ one last time. We will use \eqref{4.4} to show that $S(\alpha; M, N)$ can approach $\infty$ as $M, N \to \infty$. We retain the notation introduced in the proof of Theorem \ref{th2} and proceed with the estimation of $S(\alpha; M, N)$. 

Let $j_0 \ge 2$ be a fixed integer chosen so that $r_{j_0}$ is sufficiently large, and let $N$ be a large even integer. We restrict the choice of $N$ to integers of the form $r_j - b$, with $j > j_0$ and $b \in \{ 1, 2\}$. Using \eqref{4.4}, \eqref{4.7}, \eqref{4.10}, \eqref{4.11}, and \eqref{4.15}, we obtain the following version of~\eqref{4.17}:
\begin{equation}\label{5.1}
  S(\alpha; M, N) = \sum_{j \in \mathcal I_{\alpha}(M)} S_j(\alpha; M, N) + O_{\alpha}\left( N^{-p} \right),
\end{equation}
where $\mathcal I_\alpha(M)$ is the set of indices defined in \S\ref{s4.4},
\[
  S_j(\alpha; M, N) = \frac {4}{\pi} \re\bigg\{ g(M)V_j''(\alpha;M) - \sum_{m = 1}^{M-1} \Delta g(m) V_j''(\alpha;m) \bigg\},
\] 
and $V_j''(\alpha; m)$ is the sum defined by \eqref{4.12}. Furthermore, by \eqref{4.13} and the choice of $N$, for indices $j$ with $r_{j-1} \le N$, we have
\[
  \left| V_j''(\alpha; M, N) \right| \le 4q^{-2}r_j \le 4q^{-2}(N+2),
\] 
whence
\[
  \left| S_j(\alpha; M, N) \right| \le c_5r_{j-1}^{-2}N,
\]
for some absolute constant $c_5 > 0$. Thus, from \eqref{5.1},
\begin{equation}\label{5.2}
  S(\alpha; M, N) = \sum_{j \in \mathcal I_{\alpha}'(M,N)} S_j(\alpha; M, N) + O_{\alpha}( N ),
\end{equation}
where $\mathcal I_{\alpha}'(M,N)$ is the set of indices $j \in \mathcal I_{\alpha}(M)$ such that $r_{j-1} > N$.

We now proceed to obtain an approximation for $V_j''(\alpha; m)$, which we will then use to estimate the right side of \eqref{5.2}. Let $a, q, r$ and $\theta$ be as in \S\ref{s4.3}. When $2k \equiv q \pmod {2q}$, we have  
\[
  U(k\alpha; m) = \sum_{n = 1}^m e(kn\theta) = \sum_{n = 0}^{m-1} e(kn\theta) + O(1).
\]
Thus, using \eqref{1.1} and the Taylor expansion $e(z) = 1 + 2\pi iz + O( |z|^2 )$, we find that when $2k \equiv q \pmod {2q}$ and $k \le r$,
\[
  U(k\alpha; m) = \frac {e(km\theta) - 1}{1 - e(k\theta)} + O(1) = \frac {e(km\theta) - 1}{-2\pi ik\theta} + O(1).
\]
We substitute this approximation in \eqref{4.12} and use Lemma \ref{l2} to bound the contribution from the error terms. We obtain
\begin{align*}
  V_j''(\alpha; m) &= \sum_{k \in \mathcal L_j(M)} \frac {e(kN\theta/2)}{k^2-1} \,
  \frac {e(km\theta/2) - 1}{\pi ik\theta} + O\left( q^{-2} \right) \notag\\
  &= \sum_{k \in \mathcal L_j(M)} \frac {1}{k^2-1} \int_N^{N+m} e(kt\theta/2) \, dt + O\left( q^{-2} \right),
\end{align*} 
where $\mathcal L_j(M)$ denotes the set of even integers $k$ such that $\frac 12k \in \mathcal K_j(M)$ and $k \equiv q \pmod {2q}$. Hence,
\[
  S_j(\alpha; M, N) = \frac {4}{\pi} \sum_{k \in \mathcal L_j(M)} \frac {\re\Xi_k(\alpha; M, N)}{k^2-1} , 
\]
where
\[
  \Xi_k(\alpha; M, N) = g(M) \int_N^{N+M} e(kt\theta/2) \, dt - \sum_{m = 1}^{M-1} \Delta g(m) \int_N^{N+m} e(kt\theta/2) \, dt.
\]
Recall that here $g(x) = (N+x)^{-p}$. Interchanging the order of summation and integration in $\Xi_k(\alpha; M, N)$, we find that
\begin{align*}
  \Xi_k(\alpha; M, N) &= \int_N^{N+M} \bigg\{ g(M) - \sum_{ t - N \le m \le M - 1} \Delta g(m) \bigg\} e(kt\theta/2) \, dt \\
  &= \int_N^{N+M} g( \lceil t \rceil - N) e(kt\theta/2) \, dt.  
\end{align*}
Since $\left| g( \lceil t \rceil - N) - t^{-p} \right| \le pt^{-p-1}$ by the mean-value theorem, we obtain
\begin{align*}
  \Xi_k(\alpha; M, N) &= \int_N^{N+M} t^{-p} e(kt\theta/2) \, dt + O\left( N^{-p} \right).  
\end{align*}
Hence, after another appeal to Lemma \ref{l2} to estimate the contribution from the error terms, we have
\begin{align}\label{5.3}
  S_j(\alpha; M, N) &= \frac {4}{\pi} \sum_{k \in \mathcal L_j(M)} \frac 1{k^2-1} \int_N^{N+M} t^{-p} \cos(\pi kt\theta) \, dt + O\left( q^{-2}N^{-p} \right) \notag\\
  &= \frac {4}{\pi} \sum_{k \in \mathcal L_j(M)} \frac {(\pi k|\theta|)^{p-1}} {k^2-1} \int_{\nu_k}^{\nu_k + \mu_k} t^{-p} \cos t \, dt + O\left( q^{-2}N^{-p} \right),
\end{align} 
where $\nu_k = \pi k|\theta|N$ and $\mu_k = \pi k|\theta|M$. Summing over $j$, we deduce from \eqref{5.2} and \eqref{5.3} that 
\begin{align}\label{5.4}
  S(\alpha; M, N) = \frac {4}{\pi} \sum_{j \in \mathcal I_{\alpha}'(M,N)} \sum_{k \in \mathcal L_j(M)} 
  \frac {(\pi k|\theta_j|)^{p-1}}{k^2-1} \int_{\nu_k}^{\nu_k + \mu_k} t^{-p} \cos t \, dt + O_{\alpha}\left( N \right),
\end{align}
where $|\theta_j| = q^{-1}\| q\alpha \|$, $q = q(r_j)$.

In order to estimate the right side of \eqref{5.4}, we will impose some restrictions on the choice of $M$. Let $\mathcal Q_{\alpha}'$ be the subset of $\mathcal Q_\alpha$ containing those $q_n$ for which
\[
  \int_1^{q_{n+1}} x^{-p} \, dx > q_n^{1-p/2},
\]
and let $\mathcal Q_{\alpha}'' = \mathcal Q_{\alpha} \setminus \mathcal Q_{\alpha}'$. The contribution to the series \eqref{i.b} from terms with $q_n \in \mathcal Q_\alpha''$ is dominated by the convergent series $\sum_q q^{-1-p/2}$. Thus, the divergence of \eqref{i.b} implies the divergence of the series
\begin{equation}\label{5.5} 
  \sum_{q_n \in \mathcal Q_{\alpha}'} \frac {1}{q_n^2} \int_1^{q_{n+1}} x^{-p} \, {dx}.
\end{equation}
In particular, the set $\mathcal Q_{\alpha}'$ is infinite. We restrict $M$ to the sequence of numbers of the form $\left\lceil q^{1 + p/3} \right\rceil$, with $q \in \mathcal Q_{\alpha}'$. 

Let $J = J(M, N)$ denote the largest index in the set $\mathcal I_{\alpha}'(M,N)$, and set $\mathcal I_{\alpha}''(M,N) = \mathcal I_{\alpha}'(M,N) \setminus \{ J \}$ and $q = q(r_J)$. Using our restriction on the choice of $M$, Lemma~\ref{l2}, and the bound
\[
  \left| \int_{\nu_k}^{\nu_k + \mu_k} t^{-p} \cos t \, dt \right|
  \le \begin{cases}
    (1-p)^{-1}\mu_k^{1 - p} & \text{if } p < 1, \\
    \ln M                   & \text{if } p = 1,
  \end{cases}
\]
we find that the term with $j = J$ in \eqref{5.4} is bounded above by
\[
   c_6 \sum_{\substack{ q \le 2k \le r_J\\ k \equiv q \!\!\!\! \pmod {2q}}} \frac {q^{1-p/2}}{k^2 - 1} 
   \le 2c_6 q^{-1},
\]
where $c_6 = c_6(p) > 0$ is a constant depending only on $p$. Hence,
\begin{align}\label{5.7}
  S(\alpha; M, N) = \frac {4}{\pi} \sum_{j \in \mathcal I_{\alpha}''(M,N)} \sum_{k \in \mathcal L_j(M)} 
  \frac {(\pi k|\theta_j|)^{p-1}}{k^2-1} \int_{\nu_k}^{\nu_k + \mu_k} t^{-p} \cos t \, dt + O_{\alpha,p}\left( N \right).
\end{align}
Since the integrals on the right side of \eqref{5.7} behave somewhat differently when $p = 1$ and when $0 < p < 1$, we now consider these two cases separately.

\subsection{The case $p = 1$}
\label{s5.1}

When $j \in  \mathcal I_{\alpha}''(M,N)$ and $k \in \mathcal L_j(M)$, we have $\mu_k = \pi k|\theta_j|M > 1$. Hence, by Lemma \ref{l6},
\begin{align*}
  \int_{\nu_k}^{\nu_k + \mu_k} t^{-1} \cos t \, dt &= \int_{\nu_k}^1 t^{-1} \cos t \, dt + O(1) \\
  &= \int_{\nu_k}^1 t^{-1} \left( 1 + O \left(t^2\right) \right) \, dt + O(1) \\
  &= -\ln \nu_k + O(1) = \ln r_j + O(\ln(kN)). 
\end{align*}
From this inequality and \eqref{5.7}, we obtain
\begin{align}\label{5.8}
  S(\alpha; M, N) &= \frac {4}{\pi} \sum_{j \in \mathcal I_{\alpha}''(M,N)} \sum_{k \in \mathcal L_j(M)} 
  \frac {\ln r_j}{k^2-1} + O_{\alpha}\left( N \right) \notag\\
  &\ge \frac 4{\pi} \sum_{j \in \mathcal I_{\alpha}''(M,N)} \frac {\ln r_j}{q(r_j)^2-1} + O_\alpha(N) \notag\\
  &\ge \sum_{q_n \in \mathcal Q_{\alpha}'(M,N)} \frac 1{q_n^2} \int_1^{q_{n+1}} \frac {dt}t + O_\alpha(N),
\end{align}
where $\mathcal Q_{\alpha}'(M,N)$ is the set of those $q_n \in \mathcal Q_{\alpha}'$ for which $q_n > N$ and $q_{n+1} \le M$. In view of the divergence of the series \eqref{5.5}, this establishes that
\[
  \limsup_{M \to \infty} S(\alpha; M, N) = \infty.
\]

\subsection{The case $0 < p < 1$}

When $j \in  \mathcal I_{\alpha}''(M,N)$ and $k \in \mathcal L_j(M)$, by Lemma \ref{l7},
\[
  \int_{\nu_k}^{\nu_k + \mu_k} t^{-p} \cos t \, dt = A_p + O_p\big( \nu_k^{1-p} + \mu_k^{-p} \big),
\]
where $A_p$ is the Fourier integral \eqref{1.6}. From this inequality and \eqref{5.7}, we obtain
\begin{align*}
  S(\alpha; M, N) &= \frac {4A_p}{\pi} \sum_{j \in \mathcal I_{\alpha}''(M,N)} \sum_{k \in \mathcal L_j(M)} 
  \frac {(\pi k|\theta_j|)^{p-1}}{k^2-1} + O_{\alpha,p}\left( N + \Delta \right) \\
  &\ge \frac {4A_p}{\pi^{2-p}} \sum_{j \in \mathcal I_{\alpha}''(M,N)} \frac {r_j^{1-p}}{q(r_j)^2} 
  + O_{\alpha,p}\left( N + \Delta \right),
\end{align*}
where
\[
  \Delta = M^{-p} \sum_{j \in \mathcal I_{\alpha}''(M,N)} \sum_{k \in \mathcal L_j(M)} \frac {(k|\theta_j|)^{-1}}{k^2-1}.
\]
By \eqref{i.3}, Lemma \ref{l2}, and the restriction on $M$, 
\begin{align*}
  \Delta \le 4M^{-p} \sum_{j \in \mathcal I_{\alpha}''(M,N)} \frac {r_j}{q(r_j)^2} 
  \le 4M^{-\gamma_p} \sum_{j \in \mathcal I_{\alpha}''(M,N)} \frac {r_j^{1-p}}{q(r_j)^2},
\end{align*}
where $\gamma_p = \frac 14p^2$. Thus, for sufficiently large values of $M$, we obtain
\begin{align}\label{5.9}
  S(\alpha; M, N) &\ge \frac {2A_p}{\pi^{2-p}} \sum_{j \in \mathcal I_{\alpha}''(M,N)} \frac {r_j^{1-p}}{q(r_j)^2} 
  + O_{\alpha,p}\left( N \right), \notag\\
  &\ge A_p' \sum_{q_n \in \mathcal Q_{\alpha}'(M,N)} \frac 1{q_n^2} \int_1^{q_{n+1}} t^{-p} \, dt 
  + O_{\alpha,p}\left( N \right),
\end{align}
where $A_p' = 2\pi^{p-2}(1-p)A_p > 0$ and $\mathcal Q_{\alpha}'(M,N)$ is defined as in \S\ref{s5.1}. Therefore, once again, using \eqref{5.9} and the divergence of the series \eqref{5.5}, we conclude that
\[
  \limsup_{M \to \infty} S(\alpha; M, N) = \infty.
\]
This completes the proof of the theorem.

\section{Proofs of the corollaries}

In this section, we derive the corollaries from Theorems \ref{th2} and \ref{th6}.

\subsection{Proof of Corollary \ref{th3}}

For $\delta > 0$, let $D_{\delta}$ denote the set of real $\alpha$ such the inequality $q_{n+1} \le q_n^{1 + \delta}$ fails for an infinite number of denominators $q_n$ of best rational approximations to $\alpha$. By a classical theorem on Diophantine approximation due to Khinchin \cite{Kh24}, for any fixed $\delta > 0$, the set $D_{\delta}$ has Lebesgue measure zero. Let $D = D_{1/2}$. Then, for $\alpha \in D$ and $f \in \mathfrak F$, we have
\[
  \int_1^{q_{n+1}} f(x) \, dx \le f(1)q_{n+1} \le f(1)q_n^{3/2}
\]
for all but a finite number of $q_n \in \mathcal Q_\alpha$, and the series \eqref{i.4} is dominated by $\sum_{q \in \mathcal Q_{\alpha}} q^{-1/2}$---which converges, because the elements of $\mathcal Q_\alpha$ grow at least exponentially. Therefore, the series \eqref{i.2} converges by Theorem \ref{th2}.

\subsection{Proof of Corollary \ref{th4}}

When $\alpha$ is an algebraic irrationality, by a celebrated result of Roth \cite{Ro55}, the inequality $q_{n+1} \le q_n^{3/2}$ holds for all but a finite number of denominators $q_n$ of best rational approximations to $\alpha$. Thus, we can argue as in the proof of Corollary \ref{th3}.

\subsection{Proof of Corollary \ref{th5}}

We need some information about the the best rational approximations to $1/\pi$. By a classical result of Mahler~\cite{Ma53}, for all $a, q \in \mathbb Z$ with $q \ge 2$, 
\begin{equation}\label{6.1}
  | \pi - a/q | \ge q^{-42}.
\end{equation}
If $a_n/q_n$ is a best rational approximation to $1/\pi$, with $n$ sufficiently large, we deduce from \eqref{i.3} and \eqref{6.1} that
\[
  \frac 1{q_nq_{n+1}} > \left| \frac 1{\pi} - \frac {a_n}{q_n} \right| > \frac 1{10}\left| {\pi} - \frac {q_n}{a_n} \right| \ge \frac 1{10a_n^{42}} > \frac 1{c_7q_n^{42}},
\]
where $c_7 > 0$ is an absolute constant. Thus, the denominators of best rational approximations to $1/\pi$ satisfy $q_{n+1} \le c_7q_n^{41}$. We can now apply Theorem \ref{th2} with $f(x) = x^{-1}$ and $\alpha = 1/\pi$. In this case, the series \eqref{i.4} takes the form
\[
  \sum_{q_n \in \mathcal Q_{1/\pi}} \frac {\ln q_{n+1}}{q_n^2}.
\]
By the discussion in the preceding paragraph, this series is dominated by $\sum_q q^{-2}\ln q$, so the convergence of \eqref{i.1} follows from Theorem \ref{th2}.

\subsection{Proof of Corollary \ref{th7}}

In 1851, Liouville \cite{Li51} considered the series 
\[
  \xi = \sum_{k=1}^{\infty} 10^{-k!}
\]
and proved that its sum is transcendental, thus furnishing the first known example of a transcendental number. 

More generally, we let $a,q$ be integers, with $\gcd(a,q) = 1$, and $\{ d_k \}_{k=1}^\infty$ be any infinite sequence of $1$'s and $3$'s, and we consider the series
\[
  \lambda = \frac aq + \sum_{k=m}^{\infty} d_k10^{-k!} \qquad (m \ge 1).
\]
The partial sums $\lambda_N = a/q + \sum_{k \le N} d_k10^{-k!}$ satisfy the inequality
\begin{equation}\label{6.2}
  \left| \lambda - \lambda_N \right| \le 3 \sum_{k = N+1}^\infty 10^{-k!} < \frac {10/3}{10^{(N+1)!}}.
\end{equation} 
Let $a_n/q_n$ and $a_{n+1}/q_{n+1}$ be the best rational approximations to $\lambda$ for which $q_n \le q10^{N!} < q_{n+1}$. Then, by the construction of $a_n/q_n$, we have
\[
  \left| \lambda_N - \frac {a_n}{q_n} \right| \le \left| \lambda - \frac {a_n}{q_n} \right| + \left| \lambda - \lambda_N \right| \le 2\left| \lambda - \lambda_N \right| < \frac {7}{10^{(N+1)!}}.
\]
We infer from this inequality and \eqref{i.3} that $q_{n+1} \ge c_8 10^{N\cdot N!}$, where $c_8 > 0$ is a constant depending at most on $q$. Furthermore, when $N$ is sufficiently large, inequality \eqref{6.2} is possible only if $\lambda_N = a_n/q_n$. Therefore, for large $N$, the partial sums $\lambda_N$ belong to the sequence of best rational approximations to $\lambda$; clearly, such $\lambda_N$ have even denominators when expressed in lowest terms. This suffices to establish the divergence of the series \eqref{i.b} at $\alpha = \lambda$ for any fixed $p < 1$. Therefore, the series \eqref{i.a} diverges at the numbers $\lambda$ of the above form. Clearly, the set $L$ of all such $\lambda$ is dense in $\mathbb R$, and it is an exercise in elementary set theory to show that $L$ has the cardinality of the continuum.  

When $p = 1$, we can use a similar argument, but we need to modify the above construction of the $\lambda$'s. In this case, we want the denominators $q_n$ to satisfy the inequality $\ln q_{n+1} \ge c_8 q_n^2$. One way to achieve that is to replace the factor $10^{-k!}$ in the definition of $\lambda$ by $10^{-b_k}$, where $\{ b_k \}_{k = 1}^\infty$ is the recursive sequence defined by
\[
  b_1 = 1, \qquad b_{k+1} = 100^{b_k} \quad (k \ge 1). 
\] 
\\

\begin{acknowledgment}
  The author would like to thank Geoffrey Goodson, Alexei Kolesnikov, Pencho Petrushev, and Houshang Sohrab for several conversations, suggestions and comments during the writing of this paper.
\end{acknowledgment}

\end{document}